\newcommand{\calC}{\mathcal{C}}
\newcommand{\calE}{\mathcal{E}}
\newcommand{\calP}{\mathcal{P}}
\newcommand{\calQ}{\mathcal{Q}}
\newcommand{\sop}{[}
\newcommand{\scl}{]}
\newcommand{\sel}[2]{#1 / #2}
\newcommand{\unsubst}[2]{\sop \sel{#2}{#1} \scl}
\newcommand{\impl}{\rightarrow}
\newcommand{\dual}[1]{\overline{#1}}
\newcommand{\Lor}{\bigvee}
\newcommand{\Shallow}{\mathsf{Sh}}
\newcommand{\Deep}{{\mathsf{Dp}}}
\newcommand{\Br}{\mathsf{Br}}
\newcommand{\Exp}{\mathrm{Exp}}
\newcommand{\omerge}{\sqcup}
\newcommand{\mergered}{\stackrel{\omerge}{\rightarrow}}
\newcommand{\LK}{\ensuremath{\mathbf{LK}}}
\newcommand{\LKE}{\ensuremath{\mathbf{LKE}}}
\newcommand{\lkforall}[3][]{\infer[\forall{#1}]{#2}{#3}}
\newcommand{\lkexists}[3][]{\infer[\exists{#1}]{#2}{#3}}
\newcommand{\lkand}[3][]{\infer[\land{#1}]{#2}{#3}}
\newcommand{\lkor}[3][]{\infer[\lor{#1}]{#2}{#3}}
\newcommand{\lkcut}[3][]{\infer[\mathrm{cut}{#1}]{#2}{#3}}
\newcommand{\rk}{\ensuremath{\mathrm{rk}}}   
\newcounter{tmprecount}
\title{Expansion Trees with Cut}
\author{Federico Aschieri\thanks{Funded by FWF Lise Meitner Grant M 1930--N35 and START project Y544--N23} \and Stefan Hetzl\thanks{Funded by the WWTF Vienna Research Group (VRG) 12-004} \and Daniel Weller}
\institute{Institute of Discrete Mathematics and Geometry\\
Vienna University of Technology\\
Vienna, Austria\\
\texttt{\{federico.aschieri,stefan.hetzl,daniel.weller\}@tuwien.ac.at}}
\authorrunning{F. Aschieri S.~Hetzl and D.~Weller}
\titlerunning{Expansion Trees With Cut} 
\newtheorem{theorem}{Theorem}
\newtheorem{lemma}[theorem]{Lemma}
\newtheorem{definition}{Definition}
\newtheorem{example}{Example}
\newtheorem{remark}{Remark}
\begin{document}
\maketitle

\begin{abstract}
Herbrand's theorem is one of the most fundamental insights in logic. From the syntactic point of view, it suggests a compact representation of proofs in classical first- and higher-order logic by recording the information of which instances have been chosen for which quantifiers. \\
This compact representation is known in the literature as Miller's expansion tree proof. It is inherently analytic and hence corresponds to
a cut-free sequent calculus proof. Recently several extensions of
such proof representations to proofs with cuts have been proposed. These
extensions are based on graphical formalisms similar to proof nets
and are limited to prenex formulas.\\
In this paper we present a new syntactic approach that directly extends Miller's expansion trees by cuts and covers also non-prenex formulas. We describe a cut-elimination procedure for our expansion trees with cut that is based on the natural reduction steps and show that it is weakly normalizing.
\end{abstract}

\section{Introduction}

Herbrand's theorem~\cite{Herbrand30Recherches,Buss95Herbrand}, one of the
most fundamental insights of logic, characterizes the
validity of a formula in classical first-order logic by the existence of a propositional
tautology composed of instances of that formula.

From the syntactic point of view this theorem induces a way of describing proofs:
by recording which instances have been picked for which quantifiers we
obtain a description of a proof up to its propositional part, a part we
often want to abstract from. An example for a formalism that carries out
this abstraction are Herbrand proofs~\cite{Buss95Herbrand}.
This generalizes nicely to most classical systems with quantifiers,
in particular to simple type theory, as in Miller's \emph{expansion tree proofs}~\cite{Miller87Compact}.
Such formalisms are compact and useful proof certificates in many situations; they
are for example produced naturally by methods of automated deduction such
as instantiation-based reasoning~\cite{Korovin09Instantiation} and they play
a central role in many proof transformations in the GAPT-system~\cite{Ebner16System}.

These formalisms consider only instances of the formula that has been proved
and hence are {\em analytic} proof formalisms, corresponding to cut-free proofs
in the sequent calculus. Considering an expansion tree to be a compact
representation of a proof, it is thus natural to ask about the possibility of extending this kind of representation
to {\em non-analytic} proofs, corresponding to proofs with cut in the sequent
calculus.

In addition to enlarging the scope of instance-based proof representations, the
addition of cuts to expansion trees promises to shed more light on the computational
content of classical logic. This is a central topic of proof theory and has
therefore attracted considerable attention, see ~\cite{Parigot92LambdaMu, AschieriZ16, Danos97New,Curien00Duality},
\cite{Urban00Classical,Urban01Strong},
 or~\cite{Baaz00CutElimination}, for
different investigations in this direction.

Two  proof formalisms manipulating only formula instances and incorporating a notion of cut have recently been proposed: proof
forests~\cite{Heijltjes10Classical} and Herbrand nets~\cite{McKinley13Proof}.
While some definitions in the setting of proof forests are motivated by the game semantics for classical arithmetic~\cite{Coquand95Semantics}, Herbrand nets are based on methods for
proof nets~\cite{Girard87Linear}.
These two formalisms share a number of
properties: both of them work in a graphical notation for proofs,
both deal with prenex formulas only, for both weak but no strong normalization results are known.

In this paper we present a  purely \emph{syntactic} approach to the topic.  We start from expansion tree proofs,  add cuts and define cut-reduction rules,  naturally extending the existing
literature in this tradition. The result is a \emph{rewriting theory of expansion trees  with cuts}. The main staple of a good rewriting theory is that the syntax should look simple and  the reduction rules should be as few and as elementary as possible. 
When a rewriting system falls short of any of these requirements, reasoning about its combinatorial properties may easily become unwieldy; when it satisfies them, it is always a good sign. Indeed, expansion trees are by design compact strings of symbols, expansion proofs just lists of those trees and  the reduction rules that we shall present straightforward manipulation of those lists. This is a novel technical achievement. In fact,  graph-based formalisms like proof forests and Herbrand nets allow rather simple mathematical definitions of tree forests and their transformations, but as soon as one tries to write them down syntactically, their rewriting complexity becomes evident. A  simple rewriting theory may help to solve the intricate combinatorial problems that arise, like strong normalization. 

With respect to proof forests, the main related work, we offer several technical novelties.\\

\emph{Miller's correctness criterion.} Expansion trees are just simple collections of witnesses for quantifiers, so not every tree makes logical and semantical sense. Miller's correctness criterion \cite{Miller87Compact} is the most direct known way to express that an expansion tree (list) is sound: it requires a certain acyclic ordering of the tree nodes, it maps the tree into a propositional formula and asks it to be a tautology. Syntactically, the definition of Miller's criterion follows in a straightforward way the tree's shape and the obtained propositional formula matches exactly the tree's number of leaves. Semantically, Miller's criterion  states that a list of expansion trees represents a winning strategy in Coquand's backtracking games \cite{Coquand95Semantics}. Though Heijltjes' correctness criterion was motivated as well by Coquand's game semantics, it represents a different way of extracting a propositional formula from a list of expansion trees: it is constructed from a case distinction on all cuts, rendering the formula's size  exponential with the respect to the number of cuts. On our side, we managed to keep Miller's correctness criterion unchanged. As by-product, we also effortlessly obtain a treatment of non-prenex formulas. This avoids not only the distortion of the intuitive meaning of a formula
by prenexification, but also the non-elementary increase in complexity that
can be caused by prenexification~\cite{Baaz99CutNormal}.
It also seems possible to extend proof forests and Herbrand
nets to non-prenex formulas, but this has not been done in~\cite{Heijltjes10Classical}.
\\

\emph{Cut-Reductions}. Eliminating cuts from expansion proofs resembles a Coquand game between expansion trees, when they are interpreted as strategies.  Following this game semantics analogy, one would thus  expect, during cut-elimination, to only encounter new trees whose branchings  are isomorphic to sub-trees of the original expansion proof. This however does not happen in the theory of proof forests and Herbrand nets: the restructuring performed during cut-elimination is significant and trees eventually become much bigger than the original ones, due to an operation of copying and glueing  them together. Though we are not pursuing the game semantics analogy here, we  define cut-reduction steps that instead satisfy the mentioned condition. The gain is all about the rewriting theory of expansion proofs: cut-reduction only involves  the operations of copying, decomposing, substituting terms and renaming variables applied to subtrees of the original ones. \\

\emph{Pruning and Bridges}. In proof forests an unexpected technical issue arises. Cut-reductions create some unwanted ``bridges'' that cause non-termination of cut-elimination. Therefore, additional restructuring of the forest is needed, this time in terms of scissors, cutting those bridges. Here we show that bridges are not an issue at all and our cut-elimination terminates, regardless of bridges. Again, in this way we avoid an additional layer of rewriting complexity. \\

\subsection{Plan of the Paper}
In Section \ref{sec:expansion}, we modify Miller's concept of expansion proof in order to also include special pairs of expansion trees, which  represent logical cuts. In Section \ref{sec:completeness}, we show that our expansion proofs are sound and complete with respect to first-order classical validity. In Section  \ref{sec:cutelim}, we define a cut-elimination procedure which transform any expansion proof with cuts into a cut-free one.
%

\section{Expansion Trees}\label{sec:expansion}

In this entire paper we work with classical first-order logic. Formulas and
terms are defined as usual. In order to simplify the exposition, we restrict
our attention to formulas in negation normal form (NNF) and without vacuous quantifiers. Mutatis mutandis all
notions and results of this paper generalize to arbitrary formulas. We write
$\dual{A}$ for the de Morgan dual of a formula $A$. A {\em literal} is an
atom $P(t_1,\ldots,t_n)$ or a negated atom $\dual{P}(t_1,\ldots,t_n)$.
We start by defining Miller's concept of expansion tree \cite{Miller87Compact}. 
\begin{definition}[Expansion Trees]\label{def:exptree}
Expansion trees and a function $\Shallow(\cdot)$ (for {\em shallow}) that maps
an expansion tree to a propositional formula are defined inductively as follows:
\begin{enumerate}
\item A literal $L$ is an expansion tree with $\Shallow(L)=L$.
\item If $E_1$ and $E_2$ are expansion trees and $\circ \in \{ \land,\lor \}$, then
$E_1\circ E_2$ is an expansion tree with $\Shallow(E_1\circ E_2) = \Shallow(E_1)\circ \Shallow(E_2)$.
\item If $ t_1,\ldots,t_n $ is a sequence of terms and $E_1,\ldots,E_n$ are
expansion trees with $\Shallow(E_i) = A\unsubst{x}{t_i}$
for $i=1,\ldots,n$, then
$E = \exists x\, A +^{t_1} E_1 \cdots +^{t_n} E_n$ is an expansion tree with
$\Shallow(E) = \exists x\, A$.
\item If $E_0$ is an expansion tree with $\Shallow(E_0)=A\unsubst{x}{\alpha}$, then
$E = \forall x\, A +^\alpha E_0$ is an expansion tree with $\Shallow(E) = \forall x\, A$.
\end{enumerate}
The $+^{t_i}$ of point 3. are called {\em $\exists$-expansions}
and the $+^\alpha$ or point 4. are called {\em $\forall$-expansions},
and both $\forall$- and $\exists$-expansions
are called {\em expansions}.
The variable $\alpha$ of a $\forall$-expansion $+^\alpha$ is called
{\em eigenvariable} of this expansion. 
We say that $+^{t_i}$ {\em dominates} all the expansions
in $E_i$. Similarly, $+^\alpha$ {\em dominates} all the expansions
in $E_0$.  We also say that $E$ is an {\em expansion tree of $\Shallow(E)$}. If $\calE=E_{1}, \dots, E_{n}$ is a sequence of expansion trees, we define $\Shallow(\calE)=\Shallow(E_{1}), \ldots, \Shallow(E_{n})$.

\end{definition}
We recall now the definition of the propositional formula $\Deep(E)$, which is used to state Miller's correctness criterion for an expansion tree $E$. \begin{definition}
We define the function $\Deep(\cdot)$ (for {\em deep}) that maps an expansion
tree to a propositional formula as follows:
\begin{align*}
\Deep(L) &= L\ \mbox{for a literal}\ L,\\
\Deep(E_1\circ E_2) &= \Deep(E_1)\circ \Deep(E_2)\ \mbox{for $\circ\in\{ \land, \lor \}$},\\
\Deep(\exists x\, A +^{t_1} E_1 \cdots +^{t_n} E_n) &= \Lor_{i=1}^n \Deep(E_i),\ \mbox{and}\\
\Deep(\forall x\, A +^{\alpha} E_0) &= \Deep(E_0).
\end{align*}
If $\calE=E_{1}, \dots, E_{n}$ is a sequence of expansion trees, we define $\Deep(\calE)=\Deep(E_{1}), \ldots, \Deep(E_{n})$.
\end{definition}
Cuts are simply defined as pairs of expansion trees, whose shallow formulas are one the involutive negation of the other.
\begin{definition}[Cut]
A {\em cut} is a set $C = \{E_1, E_2\}$ of two expansion trees s.t.\ 
$\Shallow(E_1) = \dual{\Shallow(E_2)}$.  A formula is called positive if its top connective is $\lor$ or $\exists$ or
a positive literal. An expansion tree $E$ is called positive if $\Shallow(E)$ is
positive. It will sometimes be useful to consider a cut as an ordered pair: to that aim
we will write a cut as $C = (E_1,E_2)$ with parentheses instead of curly braces
with the convention that $E_1$ is the positive expansion tree. For a cut
$C = (E_1,E_2)$, we define $\Shallow(C) = \Shallow(E_1)$ which is also called
{\em cut-formula} of $C$. We define $\Deep(C) = \Deep(E_1) \land \Deep(E_2)$. If $\calC=C_{1}, \dots, C_{n}$ is a sequence of cuts, we define $\Deep(\calC)=\Deep(C_{1}), \ldots, \Deep(C_{n})$ and $\Shallow(\calC)=\Shallow(C_{1}), \ldots, \Shallow(C_{n})$.
\end{definition}
For each expansion tree  we now define the set of finite formulas and number sequences, representing all formulas that one encounters  and all branch choices one makes in any complete path from the tree's root to one of its leaves. This concept will  soon be needed  for defining correctness of expansion proofs.

\begin{definition}[Formula Branch]\label{defi-fb}
We define a function $\Br(\cdot)$ (for {\em branch}) that maps an expansion tree with merges to a finite set $\{l_1,\ldots, l_k\}$, where each $l_i$ is some list made of formulas and the integers $1$ or $2$.
\begin{align*}
\Br(L) &= L\ \mbox{for a literal}\ L,\\
\Br(E_1\circ E_2) &= \{ \Shallow(E_{1}\circ E_{2}), i, s\ |\  s\in\Br(E_i) \}\ \mbox{for $\circ\in\{ \land, \lor \}$},\\
\Br(\exists x\, A +^{t_1} E_1 \cdots +^{t_n} E_n) &= \{\exists x\, A, s\ |\ s\in \Br(E_i)\mbox{ with $i\in\{1, \ldots, n\}$}\},\\
\Br(\forall x\, A +^{\alpha} E_0) &= \{\forall x\, A, s\ |\ s\in \Br(E_0)\},\\
\end{align*}
For every cut $C=(E_{1}, E_{2})$ we define $\Br(C)=\Br(E_{1})\cup \Br(E_{2})$.
 
\end{definition}
A very simple property that we shall use without further mentioning is the following. 
\begin{lemma}\label{lemma-ins}
Let $E$  be an expansion tree and $F$ a sub-tree of $E$. Then there is a formula sequence $s$ such that for every $r\in \Br(F)$, it holds that $s, r\in \Br(E)$. 
\end{lemma}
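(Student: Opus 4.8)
The plan is to prove this by structural induction on $E$, where at each step $F$ ranges over all sub-trees of $E$. The intended witness $s$ is simply the \emph{path} recorded by $\Br$ while descending from the root of $E$ down to the root of $F$: every branch of $E$ that passes through $F$ is obtained by prefixing the corresponding branch of $F$ with one and the same initial segment $s$, and the clauses of Definition~\ref{defi-fb} are arranged precisely so that this initial segment is what $\Br$ prepends as it recurses into a sub-tree. So the lemma is really the statement that $\Br$ respects the subtree relation, with $s$ exact.

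First I would treat the base case $F = E$ (which subsumes the case that $E$ is a literal, whose only sub-tree is itself): here I take $s$ to be the empty sequence, so that $s, r = r \in \Br(E)$ for every $r \in \Br(F)$ trivially.

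For the inductive step I would assume $F$ is a proper sub-tree of $E$ and split on the top-level shape of $E$. If $E = E_1 \circ E_2$ with $\circ \in \{\land,\lor\}$, then $F$ is a sub-tree of some $E_i$; applying the induction hypothesis to $E_i$ yields a sequence $s'$ with $s', r \in \Br(E_i)$ for all $r \in \Br(F)$, and by the clause $\Br(E_1 \circ E_2) = \{\Shallow(E_1 \circ E_2), i, s \mid s \in \Br(E_i)\}$ the choice $s = \Shallow(E_1 \circ E_2), i, s'$ gives $s, r \in \Br(E)$. The quantifier cases are analogous but simpler: if $E = \exists x\, A +^{t_1} E_1 \cdots +^{t_n} E_n$ and $F$ lies in $E_i$, I set $s = \exists x\, A, s'$; if $E = \forall x\, A +^{\alpha} E_0$ and $F$ lies in $E_0$, I set $s = \forall x\, A, s'$. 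In each case the matching clause of Definition~\ref{defi-fb} guarantees $s, r \in \Br(E)$.

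I do not expect a genuine obstacle here: the induction goes through mechanically. The only points demanding care are the bookkeeping of the sequences — in particular remembering to record the branch index $i$ in the binary connective case, since $\Br$ distinguishes the two children there — and being explicit that ``sub-tree'' is meant in the standard inductive sense (the immediate sub-trees of $E_1 \circ E_2$ are $E_1, E_2$; of an $\exists$-node the trees $E_1, \ldots, E_n$; of a $\forall$-node the single $E_0$; a literal has none), so that the case analysis on the shape of $E$ is exhaustive.
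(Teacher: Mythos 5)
Your proof is correct and takes essentially the same approach as the paper, which disposes of this lemma with the single line ``By a straightforward induction on $E$'' — the induction you spell out (witness $s$ = the prefix $\Br$ records down to the root of $F$, base case $F=E$ with empty $s$, and the case split following the clauses of Definition~\ref{defi-fb}, including the branch index $i$ for binary connectives) is exactly that straightforward argument made explicit.
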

\begin{proof}
By a straightforward induction on $E$.
\end{proof}

Expansion proofs will be defined as sequences of expansion trees and cuts satisfying a number of properties. The correctness criterion of expansion tree proofs~\cite{Miller87Compact},  as well as those of proof forests~\cite{Heijltjes10Classical} and Herbrand nets~\cite{McKinley13Proof}, have
two main components: 1.~a tautology-condition on one or more quantifier-free
formulas and 2.~an acyclicity condition on one or more orderings. These conditions can be interpreted, logically, as ensuring that expansion proofs represents logical proofs, semantically, as defining correct winning strategies in Coquand games with backtracking ~\cite{Heijltjes10Classical, AschieriGames}. While the tautology
condition of~\cite{Miller87Compact} generalizes to the setting of cuts in a straightforward
way, the acyclicity condition needs a bit more work: in the setting of cut-free
expansion trees it is enough to require the acyclicity of an order on the $\exists$-expansions.
%
In our setting that includes cuts we also have to speak about the order of
cuts (w.r.t.\ each other and w.r.t.\ $\exists$-expansions).
To simplify our treatment of this order we also include $\forall$-expansions.
Together this leads to the following
inference ordering constraints.
\begin{definition}[Dependency Relation]\label{defi-dependency}
Let  $\calP = \calC, \calE$, where  $\calC$ is a sequence of cuts and $\calE$ a sequence of expansion trees.
 We will define the
{\em dependency relation} $<_\calP$, which is a binary relation on the set
of expansions and cut \emph{occurrences} in $\calP$. First,
we define the binary relation $<^0_\calP$ (writing $<^0$ if $\calP$ is clear
from the context) as the least relation satisfying:
%
%
%
\begin{enumerate}
\item $v <^0 w$ if $w$ is an $\exists$-expansion in $\calP$ whose term contains the eigenvariable of the $\forall$-expansion $v$
\item $v <^0 w$ if $v$ is an expansion in $\calP$ that dominates the expansion $w$
\item $C <^0 v$ if $v$ is an expansion of the cut $C$ in $\mathcal{C}$
\item $v <^0 C$ if $C$ is  a cut  and $\Shallow(C)$ contains the eigenvariable of the $\forall$-expansion $v$
\end{enumerate}
$<_\calP$ is then defined to be the transitive closure of $<^0$. Again,
we write $<$ for $<_\calP$ if $\calP$ is 
clear from the context.
\end{definition}
%
%
\noindent As clauses 1--4 never relate two cuts, there is no $<_{\calP}$-cycle containing cuts only, thus $<_\calP$ is cyclic iff $w<_\calP w$ for an
expansion $w$: we will make use of this property without further mention. 

We now define the concept of expansion proof.
  In the following, lists of expansion trees and cuts will be identified modulo permutation of their elements.
\begin{definition}[Expansion Proofs]\label{def:exp_preproof}
Let $\calC=C_{1},\ldots, C_{n}$ be a sequence of cuts 
 and let
$\calE=E_{1},\ldots, E_{m}$ be a sequence of expansion trees. Let $\calP = \calC, \calE$. We define $\Shallow(\calP) = \Shallow(\calE)$, which
corresponds to the end-sequent of a sequent calculus proof, and
$\Deep(\calP) = \Deep(\calE),\Deep(\calC)$, which is a sequent of quantifier-free
formulas, and $\Br(\calP)=\Br(C_{1})\cup\ldots \cup\Br(C_{n})\cup \Br(E_{1})\cup\ldots \cup \Br(E_{m})$.
Then
$\calP$ is called {\em expansion proof} whenever:

\begin{enumerate}
\item \emph{(weak regularity)} for every $S$ and $T$ in $\calP$, if  $s, \forall x A, A\unsubst{x}{\alpha}, s'\in \Br(T)$ and $r, \forall x B, B\unsubst{x}{\alpha}, r'\in \Br(S)$, then  $s=r$,  $A=B$ and $S, T$ are both trees or both cuts.
\item (acyclicty) $<_\calP$ is acyclic, that is, $x<_\calP x$ holds for no $x$.
\item (validity) $\Deep(\calP)$ is a tautology, that is, a valid sequent.
\item (eigenvariable condition) For every $\forall$-expansion $+^{\alpha}$ in $\calP$, the variable $\alpha$ does not occur in $\Shallow(\calE)$.

\end{enumerate}
\end{definition}
%
An important difference of expansion proofs with respect to other formalisms, such as proof
forests~\cite{Heijltjes10Classical} and Herbrand nets~\cite{McKinley13Proof}, is that the same $\forall$-expansion can occur multiple times. This phenomenon is very natural, as soon as one realizes that the  \emph{weak regularity} condition that we have imposed corresponds to an interpretation of eigenvariables as \emph{Skolem functions}.
 Namely, weak regularity  ensures: that the same witness is only used for the same formula with same parameters;  that an expansion proof can always be transformed into one satisfying the usual regularity condition that every $\forall$-expansion occurs exactly one time \cite{Miller87Compact}. This last property, that we shall not prove, 
guarantees that we are still working with the familiar objects. Our weak regularity condition offers, however, a great technical advantage.  Namely, the definition of cut-reduction becomes much easier, as it avoids the heavy restructuring of the expansion trees which would be needed to prevent duplication of $\forall$-expansions. 
  
\noindent 
Condition 2 and 3 embody  Miller's correctness criterion.
Condition 4 could be formulated as asking that $\Shallow(\calP)$ does not contain free variables. But the real trouble is indeed that
 if $\calP$ is such that $\Shallow(\calP)$  contains free variables, then the eigenvariable of some $\forall$-expansion may be contained in $\Shallow(\calP)$, so that $\calP$ would not represent a proof of $\Shallow(\calP)$. This issue will become transparent in  Section \ref{sec:completeness}, where we show that the notion of expansion proof represents indeed a sound and complete proof system with respect to classical first-order validity. Moreover, again because that the eigenvariable of some $\forall$-expansion could be contained in $\Shallow(\calP)$, without condition 4 the  notion of expansion proof would  not  be closed under the cut-reduction that we shall provide in Section \ref{sec:cutelim}.
%
%
\begin{example}\label{ex.exp_preproof}
Consider the straightforward proof of $P(a)\impl \exists z\, Q(z)$ from
$\exists y\forall x\, (P(x) \impl Q(f(y)))$ via a cut on
$\forall x\exists y\, (P(x) \impl Q(f(y)))$. In negation normal form
these formulas are $\dual{P}(a)\lor \exists z\, Q(z)$,
$\exists y\forall x\, (\dual{P}(x) \lor Q(f(y)))$,
and $\forall x\exists y\, (\dual{P}(x)\lor Q(f(y)))$. The proof will
be represented by the expansion proof $\calP = \{ E^+, E^- \}, E_1, E_2$ where
\begin{align*}
E^+ = &\ \exists x\forall y\, ( P(x)\land \dual{Q}(f(y))) +^a (\ \forall y\, ( P(a) \land \dual{Q}(f(y)))
       +^\gamma P(a)\land\dual{Q}(f(\gamma))
        \ ) \\
E^- = &\ \forall x\exists y\, ( \dual{P}(x)\lor Q(f(y)) ) +^\beta (\ \exists y\, ( \dual{P}(\beta) \lor Q(f(y))) +^\alpha ( \dual{P}(\beta) \lor Q(f(\alpha)))
\ )\\
E_1 = &\ \forall y\exists x\, (P(x) \land \dual{Q}(f(y))) +^\alpha (\ \exists x\, (P(x)\land \dual{Q}(f(\alpha))) +^\beta P(\beta)\land\dual{Q}(f(\alpha)) 
\ )\\
E_2 = &\ \dual{P}(a) \lor ( \exists z\, Q(z) +^{f(\gamma)} Q(f(\gamma)) )
\end{align*}
We have $\Shallow(\calP) = \Shallow(E_1,E_2) =
\forall y\exists x\, (P(x) \land \dual{Q}(f(y))), \dual{P}(a) \lor \exists z\, Q(z)$
and
\begin{align*}
\Deep(\calP) =&\ \Deep(E^+)\land \Deep(E^-), \Deep(E_1), \Deep(E_2)\\
 =&\ (P(a) \land \dual{Q}(f(\gamma))) \land ( \dual{P}(\beta) \lor Q(f(\alpha))), P(\beta)\land\dual{Q}(f(\alpha)), \dual{P}(a)\lor Q(f(\gamma))
\end{align*}
 Note
that $\Deep(\calP) $
is a tautology (of the form $A\land B, \dual{B}, \dual{A}$). Let us now consider
the dependency relation induced by $\calP$: in $\calP$ each term belongs to at most one $\exists$-
and at most one $\forall$-expansion.
In such a situation we can uniformly write all expansions as $Qt$ for some
term $t$ and $Q\in\{ \exists, \forall \}$. The expansions of $\calP$ are
then written as $\exists a$, $\forall \gamma$, $\forall \beta$, $\exists \alpha$,
$\forall \alpha$, $\exists \beta$, and $\exists f(\gamma)$. Furthermore,
$\calP$ contains a single cut $C$. Then $<^0$ is exactly:
\begin{enumerate}
 \item $\forall \gamma <^0 \exists f(\gamma)$, $\forall \beta <^0 \exists \beta$,
$\forall \alpha <^0 \exists \alpha$,
\item $\exists a <^0 \forall \gamma$, $\forall \beta <^0 \exists \alpha$, $\forall \alpha <^0 \exists \beta$,
\item $C <^0 \exists a$, $C <^0 \forall \gamma$, $C <^0 \forall \beta$, $C <^0 \exists \alpha$,
\item there is no $v<^0 C$ as the cut formula of $C$ is variable-free.
\end{enumerate}
As the reader is invited to verify, $<$ is acyclic.
\end{example}

\section{Expansion Proofs and Sequent Calculus}\label{sec:completeness}

In this section we will clarify the relationship between our expansion proofs
and the sequent calculus. The concrete version of sequent calculus is of
no significance to the results presented here, they
hold mutatis mutandis for every version that is common in the literature.
For technical convenience, we treat sequents as multisets of formulas in Section \S \ref{complete} and as sets of formulas in Section \S \ref{sound}. 
%
\begin{definition}
The calculus {\LK} is defined as follows: initial sequents are of the form
$\Gamma, A, \dual{A}$ for an atom $A$. The inference rules are
\[
\lkforall{\Gamma, \forall x\, A}{\Gamma, A\unsubst{x}{\alpha}}
\quad 
\lkexists{\Gamma, \exists x\, A}{\Gamma, \exists x\, A, A\unsubst{x}{t}}
\quad
\lkand{\Gamma, A\land B}{\Gamma, A & \Gamma, B}
\quad
\lkor{\Gamma, A\lor B}{\Gamma, A, B}
\quad
\lkcut{\Gamma}{\Gamma, A & \dual{A}, \Gamma}
\]
with the usual side conditions: $\alpha$ must not appear in $\Gamma, \forall x\, A$
and $t$ must not contain a variable which is bound in $A$.
\end{definition}
%
An {\LK}-proof is called {\em regular} if each two $\forall$-inferences have
different eigenvariables and different from the free variables in the conclusion of the proof.
From now on we assume w.l.o.g.\ that all {\LK}-proofs are regular.

\subsection{From Sequent Calculus to Expansion Proofs}\label{complete}
In this section we describe how to read off expansion trees from {\LK}-proofs (with sequents as multisets), thus
obtaining a completeness theorem for expansion proofs. For representing
a formula $A$ that is introduced by (implicit) weakening we use the natural
coercion of $A$ into an expansion tree, denoted by $A^\mathrm{E}$: $(\exists x A)^{E}=\exists x A +^{x} A^{E}$, $(\forall x A)^{E}=\forall x A +^{\alpha} A^{E}$ ($\alpha$ fresh), $(E_{1}\circ E_{2})^{E}=E_{1}^{E}\circ E_{2}^{E}$, $L^{E}=L$ for $L$ atomic.
%
For a sequent $\Gamma = A_1,\ldots,A_n$ we define
$\Gamma^\mathrm{E} = A_1^\mathrm{E},\ldots, A_n^\mathrm{E}$.

\begin{definition}
For an {\LK}-proof $\pi$ define the expansion proof $\Exp(\pi)$ by
induction on $\pi$:
\begin{enumerate}
\item If $\pi$ is an initial sequent $\Gamma, A, \dual{A}$, thus with $A$ atomic, then
$\Exp(\pi) = \Gamma^\mathrm{E}, A, \dual{A}$

\item If 
$
\pi
=
\begin{array}{c}
\lkand{\Gamma, A \land B}{
  \deduce{\Gamma, A}{(\pi_A)}
  &
  \deduce{\Gamma, B}{(\pi_B)}
}
\end{array}
$
with $\Exp(\pi_A) = \calP_A, E_A$ and $\Exp(\pi_B) = \calP_B, E_B$ where
$\Shallow(E_A) = A$ and $\Shallow(E_B) = B$, then
$\Exp(\pi) = (\calP_A, \calP_B, E_A \land E_B)$.

\item If
$
\pi
=
\begin{array}{c}
\lkor{\Gamma, A\lor B}{
  \deduce{\Gamma, A, B}{(\pi')}
}
\end{array}
$
with $\Exp(\pi') = \calP, E_A, E_B$ where $\Shallow(E_A) = A$ and $\Shallow(E_B) = B$,
then $\Exp(\pi) = (\calP, E_A \lor E_B)$.

\item If
$
\pi
=
\begin{array}{c}
\lkforall{\Gamma, \forall x\, A}{
  \deduce{\Gamma, A\unsubst{x}{\alpha}}{(\pi_A)}
}
\end{array}
$
with $\Exp(\pi_A) = \calP, E$ where $\Shallow(E) = A\unsubst{x}{\alpha}$,
then $\Exp(\pi) = (\calP, \forall x\, A +^\alpha E)$.

\item If
$
\pi
=
\begin{array}{c}
\lkexists{\Gamma, \exists x\, A}{
  \deduce{\Gamma, \exists x\, A, A\unsubst{x}{t}}{(\pi_A)}
}
\end{array}
$
with $\Exp(\pi_A) = \calP, E, E_t$ where $\Shallow(E) = \exists x\, A$ and
$\Shallow(E_t) = A\unsubst{x}{t}$, then $\Exp(\pi) = (\calP,
E, \exists x\, A +^t E_t)$.

\item If
$
\pi
=
\begin{array}{c}
\lkcut{\Gamma}{
  \deduce{\Gamma, A}{(\pi^+)}
  &
  \deduce{\dual{A}, \Gamma}{(\pi^-)}
}
\end{array}
$
for $A$ positive with $\Exp(\pi^+) = \calP^+, E^+$ and $\Exp(\pi^-) = \calP^-, E^-$
where $\Shallow(E^+) = A$ and $\Shallow(E^-) = \dual{A}$, then $\Exp(\pi) = ((E^+, E^-), \calP^+,  \calP^-)$.
\end{enumerate}
\end{definition}

%
\begin{theorem}[Completeness]\label{thm:completeness}
If $\pi$ is an {\LK}-proof of a sequent $\Gamma$, then $\Exp(\pi)$ is an
expansion proof such that $\Shallow(\pi)=\Gamma$. If $\pi$ is cut-free, then so is $\Exp(\pi)$.
\end{theorem}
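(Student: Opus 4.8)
The plan is to proceed by induction on the structure of the $\LK$-proof $\pi$, following exactly the case analysis of the definition of $\Exp(\pi)$. For each inference rule, I would assume as induction hypothesis that the expansion proofs associated to the immediate subproofs satisfy all four conditions of Definition~\ref{def:exp_preproof} (weak regularity, acyclicity, validity, eigenvariable condition), and then verify that $\Exp(\pi)$ does too. Simultaneously I would track the claim $\Shallow(\Exp(\pi)) = \Gamma$, which follows routinely since each clause of the definition of $\Exp$ builds an expansion tree whose shallow formula is precisely the principal formula of the corresponding $\LK$-inference. The final sentence, that cut-free $\pi$ yields cut-free $\Exp(\pi)$, is immediate from the fact that only the cut-case of $\Exp$ introduces a cut pair $(E^+,E^-)$.

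\medskip

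I would organise the core of the argument condition by condition. For \emph{validity}, observe that $\Deep(\Exp(\pi))$ is a quantifier-free sequent whose tautologyhood mirrors the propositional content of $\pi$; in each inductive case $\Deep$ distributes over the connective introduced (for $\land$ and $\lor$ straightforwardly, for the cut case producing $\Deep(E^+)\land\Deep(E^-)$ on the left), and the quantifier rules leave $\Deep$ essentially unchanged up to the term/eigenvariable substitution, so that validity is preserved. For the \emph{eigenvariable condition}, the regularity assumption on $\LK$-proofs guarantees that each $\forall$-inference uses an eigenvariable $\alpha$ not occurring in its conclusion sequent, and this translates directly into $\alpha$ not occurring in $\Shallow(\calE)$ of the constructed expansion proof. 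For \emph{weak regularity}, I would use the branch function $\Br$ and Lemma~\ref{lemma-ins}: the condition constrains pairs of occurrences of the same eigenvariable $\alpha$ across $\Br(\calP)$, and again regularity of $\pi$ ensures distinct $\forall$-inferences carry distinct eigenvariables, so no conflicting branches with a shared $\alpha$ can arise.

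\medskip

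The main obstacle I expect is the \emph{acyclicity} condition, since the dependency relation $<_\calP$ couples the four clauses of Definition~\ref{defi-dependency} in a way that does not obviously decompose along the proof tree. The delicate point is that, in the binary cases ($\land$ and cut), two independently-constructed expansion proofs $\calP_A,\calP_B$ (resp.\ $\calP^+,\calP^-$) are merged, and new dependencies are introduced: the cut case adds edges $C <^0 v$ for every expansion $v$ of the cut, and potentially edges $v <^0 C$ via clause~4 when a $\forall$-eigenvariable occurs in the cut-formula. I would argue that no cycle can be created by exploiting the order in which $\LK$ introduces quantifiers: an eigenvariable $\alpha$ introduced by a $\forall$-inference in $\pi$ can only occur in terms of $\exists$-expansions or in cut-formulas that are processed \emph{below} it in $\pi$, which bounds the $<^0$-edges leaving any $\forall$-expansion by the proof's top-down order and precludes a return edge. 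Concretely, I would set up a rank or height function on expansions and cuts derived from their position in $\pi$ and show it is (weakly) monotone along $<^0$, so that the transitive closure $<_\calP$ stays acyclic; handling clause~4 and the merging of the two subproofs in the cut case consistently with this ranking is where the real care is required.
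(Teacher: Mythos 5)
Your overall architecture coincides with the paper's proof for everything except acyclicity: the paper also proceeds by induction on $\pi$ following the cases of $\Exp$, dismisses weak regularity and the eigenvariable condition as direct consequences of regularity of $\pi$, proves validity of $\Deep(\Exp(\pi))$ by a straightforward induction, and (implicitly) reads off the $\Shallow$-claim and preservation of cut-freeness from the construction. For acyclicity the two arguments genuinely differ. The paper argues \emph{locally}: using the invariant that a variable free in the end-sequent of a subproof $\pi'$ is never an eigenvariable in $\Exp(\pi')$, it shows that the single new node created by the last inference (a $\forall$- or $\exists$-expansion, or a cut) has no $<_\calP$-predecessor among the old nodes, hence can lie on no cycle. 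You instead propose a \emph{global} rank on expansions and cuts, read off from the position in $\pi$ of the inference creating each node, together with strict monotonicity of $<^0$ along it. To your credit, you isolate explicitly the point the paper's proof passes over in silence: in the binary cases ($\land$ and cut) no new expansion is created, but the merge of the two independently built expansion proofs can create new $<^0$-edges \emph{between old nodes}, about which neither ``the new node has no predecessors'' nor the induction hypotheses say anything.

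However, precisely there your argument has a gap. First, an orientation error: under the intended reading, terms of $\exists$-expansions and cut formulas containing $\alpha$ can stem only from inferences \emph{above} the $\forall$-inference of $\alpha$, not below it; this is what you need, since domination edges (clause 2) and cut-to-expansion edges (clause 3) also point upward, so that ``height above the end-sequent'' strictly increases along all four clauses of $<^0$. Second, and more seriously, your occurrence claim is not a consequence of the paper's stated notion of regularity (eigenvariables pairwise distinct and distinct from the free variables of the conclusion); it requires the stronger \emph{pure-variable} convention that each eigenvariable occurs exclusively in the subproof above its own $\forall$-inference. The paper explicitly entertains regular proofs violating this convention (in Section~\ref{sec:cut_reduction_steps} it discusses instantiating an $\exists$-quantifier ``by an eigenvariable from a different branch of the proof''). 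Under that weak reading no monotone rank can exist, because $<_{\Exp(\pi)}$ can genuinely be cyclic. Concretely: let $B=(R(x)\lor\dual{R}(x))\land(S(y)\lor\dual{S}(y))$, $G=\exists y\forall x\,B$, and let $\pi$ end in a cut on an atom $P$, where the left subproof proves $G,P$ by instantiating $G$ with witness $\beta$ and then using eigenvariable $\alpha$ for the inner $\forall$, and the right subproof proves $\dual{P},G$ symmetrically with witness $\alpha$ and eigenvariable $\beta$. This $\pi$ is regular in the paper's sense, yet $\Exp(\pi)$ contains the trees $G+^\beta(\forall x\,B[\beta/y]+^\alpha\cdots)$ and $G+^\alpha(\forall x\,B[\alpha/y]+^\beta\cdots)$, and clauses 1 and 2 of Definition~\ref{defi-dependency} yield the cycle $+^\alpha<^0+^\alpha_\exists<^0+^\beta<^0+^\beta_\exists<^0+^\alpha$ (where $+^\alpha_\exists,+^\beta_\exists$ denote the $\exists$-expansions with terms $\alpha,\beta$). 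The same crossing can be staged with an $\land$-inference instead of a cut, so the issue is not about cuts at all but about the regularity convention.

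So your plan is completable, but only after you strengthen regularity to the pure-variable convention, prove the (re-oriented) occurrence claim by induction on $\pi$, and then run your ranking argument through the four clauses of $<^0$. You should know that the paper's own proof suffers from exactly the same blind spot --- its argument covers only edges into the newly created node and says nothing about cross-edges between merged halves, so it tacitly relies on the same strengthened convention. In that sense your attempt is on par with the published proof, and your explicit identification of the merging problem is the more honest account of where the difficulty lies.
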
%
\begin{proof}
That $\Exp(\pi)$ satisfies weak regularity  and the eigenvariable condition follows directly from the
definitions as we are dealing with regular {\LK}-proofs only, thus we are constructing regular expansion proofs as well.
By a straightforward induction on $\pi$ one shows that $\Deep(\Exp(\pi))$
is a tautology.
Acyclicity is also shown inductively by observing that
if $\alpha$ is a free variable in the end-sequent of $\pi$,
then $\alpha$ is not an eigenvariable in $\Exp(\pi)$. This
implies that if $w$ is the new expansion introduced in
the construction of $\Exp(\pi)$, and $v$ is an old expansion
in $\Exp(\pi)$, then $w\not>v$, which in turn yields
acyclicity.
\end{proof}
\subsection{From Expansion Proofs to Sequent Calculus}\label{sound}
In this section we show how to construct an {\LK}-proof (with sequents as sets) from a given expansion
proof. To this aim we introduce a calculus {\LKE}, generalizing the treatment
in~\cite{Miller87Compact}, that works on  sequences of expansion
trees and cuts instead of sequents of formulas.
\begin{definition}
The axioms of {\LKE} are of the form $\calP, A, \dual{A}$ for an atom $A$.
The inference rules are
\[
\lkforall{\calP, \forall x\, A +^\alpha E_1, \ldots,  \forall x\, A +^\alpha E_n }{\calP, E_1, \ldots, E_{n}}
\]
\[
\lkexists{\calP, \exists x\, A +^{t_1} E_1\cdots+^{t_{i}} E_{i}\cdots +^{t_n} E_n}{
  \calP, \exists x\, A +^{t_1} E_1 \cdots +^{t_n} E_n +^{t_{i}}E_{i}
}
\]
\[
\lkexists{\calP, \exists x\, A +^{t_1} E_1 \cdots +^{t_n} E_n, \ldots, \exists x\, A +^{s_1} F_1 \cdots +^{s_{m}} F_m}{
  \calP, \exists x\, A +^{t_1} E_1 \cdots +^{t_{i}} E_i, \ldots, \exists x\, A +^{s_1} F_1 \cdots +^{s_{j}} F_j, E_{i+1} \ldots, E_{n}, \ldots, F_{j+1}, \ldots, F_{m}
}
\]
\[
\lkand{\calP, E_1\land E_2}{\calP, E_1 & \calP, E_2}
\quad
\lkor{\calP, E_1\lor E_2}{\calP, E_1, E_2}
\quad
\lkcut{\calP, \{E_1,  F_1\},\ldots, \{E_{n}, F_n\}}{\calP, E_1, \ldots, E_{n} & \calP, F_{1},\ldots, F_{n}}
\]
with the following side conditions: 
$\Shallow(E_{1})=\ldots =\Shallow(E_{n})$ for the cut rule; $t_{i+1}=\ldots=t_{n}=\ldots=s_{j+1}=\ldots=s_{m}$  for the second $\exists$ rule;
 the eigenvariable condition for the $\forall$ rule: $\alpha$ must not occur in
$\Shallow(\calP), \forall x\, A$.  
\end{definition}
The reader is invited to note that $\Shallow(\calP), \forall x\, A $ does
{\em not} include the cut formulas of $\calP$, though they may -- and indeed often
have to -- contain the eigenvariable
$\alpha$. 
An important
feature of the above calculus, which is easily verified, is that if $\pi$ is an {\LKE}-proof,
then $\Shallow(\pi)$ -- defined as the result of replacing  in $\pi$ each sequence  $\calP$ of expansion trees and cuts  with $\Shallow(\calP)$ -- is a {\LK}-proof. In the following proof we describe how
to transform expansion proofs to {\LK}-proofs.
\begin{theorem}[Soundness]\label{thm:soundness}
If $\calP$ is an expansion proof of a sequent $\Gamma$, then there
is an {\LK}-proof of $\Gamma$. If $\calP$ is cut-free, then so is the {\LK}-proof.
\end{theorem}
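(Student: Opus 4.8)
The plan is to prove the Soundness theorem by induction on the structure of the expansion proof $\calP$, mirroring the strategy that the $\LKE$ calculus was evidently designed to support. The key observation I would rely on is the stated fact that if $\pi$ is an $\LKE$-proof, then $\Shallow(\pi)$ is an $\LK$-proof of $\Shallow(\calP)$; hence it suffices to construct an $\LKE$-proof whose conclusion is the given expansion proof $\calP$, and then read off the desired $\LK$-proof of $\Gamma = \Shallow(\calP)$ by applying $\Shallow(\cdot)$. If $\calP$ is cut-free then no cut rule is used in the $\LKE$-derivation, so its $\Shallow$-image is a cut-free $\LK$-proof, giving the second claim. Thus the real work is to show that every expansion proof $\calP$ is derivable in $\LKE$.

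The construction of the $\LKE$-derivation proceeds by a well-founded induction driven by the dependency relation $<_\calP$, whose acyclicity (condition~2) is exactly what makes this recursion terminate. I would pick a $<_\calP$-maximal expansion or cut occurrence and peel it off by the corresponding $\LKE$-inference, reading the rules bottom-up: a maximal $\forall$-expansion $+^\alpha$ is removed by the $\forall$ rule (its maximality guarantees the eigenvariable condition $\alpha \notin \Shallow(\calP), \forall x\,A$, since clauses~1 and~4 of $<^0_\calP$ would otherwise place something above it); a maximal $\exists$-expansion is stripped by one of the two $\exists$ rules; a $\land$- or $\lor$-rooted tree is decomposed by the propositional rules; and a maximal cut is split by the cut rule into its two sides. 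When every tree has been reduced to a literal and every cut discharged, what remains is a sequence of literals, and here validity (condition~3) ensures that $\Deep$ of the leftover is a tautology consisting of literals, which forces the presence of a complementary pair $A, \dual{A}$, yielding an $\LKE$-axiom. Weak regularity (condition~1) is what lets the grouped $\forall$ and $\exists$ rules act on several identical expansions at once without violating side conditions.

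The main obstacle I expect is the bookkeeping around the eigenvariable and acyclicity conditions as expansions are removed: one must verify that peeling off a $<_\calP$-maximal element leaves a structure that is still an expansion proof (in particular still acyclic, and still satisfying weak regularity and the eigenvariable condition) so that the induction hypothesis applies to the premise(s). Acyclicity of the premise is inherited because deleting occurrences only removes edges from $<^0$; the delicate point is the $\forall$ rule, where I must argue that $<_\calP$-maximality of the chosen $\forall$-expansion, combined with condition~4 and the eigenvariable condition~4 of the expansion proof, guarantees $\alpha$ does not occur in the shallow formulas of the premise except possibly inside cut formulas, which the side condition explicitly permits. A secondary subtlety is the validity bookkeeping: I would track the invariant that $\Deep$ of the current sequence of expansion trees and cuts remains a tautological sequent as expansions are stripped, so that the base case genuinely reduces to an axiom; this follows since each $\LKE$ rule, read top-down, corresponds to a sound propositional or structural step on the $\Deep$-formulas. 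Once these invariants are established the induction closes routinely, and applying $\Shallow(\cdot)$ to the resulting $\LKE$-proof delivers the $\LK$-proof of $\Gamma$.
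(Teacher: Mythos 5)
Your overall skeleton --- construct an {\LKE}-derivation of $\calP$ by induction, then apply $\Shallow(\cdot)$ to obtain the {\LK}-proof, with cut-freeness preserved because no cut rule is used --- is exactly the paper's strategy, as are your base case and your propositional case. But the engine of your recursion is inverted, and this is a genuine error rather than a presentational difference: you peel $<_\calP$-\emph{maximal} occurrences, whereas the paper peels $<_\calP$-\emph{minimal} ones. Recall clause~2 of Definition~\ref{defi-dependency}: a dominating expansion is \emph{smaller} than everything it dominates, so the roots of the trees are minimal and the innermost expansions are maximal. The {\LKE} rules can only remove expansions sitting at the top level of the listed trees, so a $<_\calP$-maximal quantifier expansion is in general not even removable by any {\LKE} inference; it is the \emph{minimal} quantifier expansions that the paper shows to be top-level. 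Moreover, $v <_\calP w$ encodes ``the inference for $v$ must occur \emph{below} the inference for $w$ in the sequent proof,'' so the occurrence whose rule is placed at the bottom (the one peeled first in the induction) must be minimal.

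A concrete counterexample to your ordering: take $\calP = \forall x\, P(x) +^\alpha P(\alpha),\ \exists y\, \dual{P}(y) +^\alpha \dual{P}(\alpha)$, which is an expansion proof. By clause~1 of Definition~\ref{defi-dependency}, the $\forall$-expansion is below the $\exists$-expansion (whose term $\alpha$ contains the eigenvariable), so the $\exists$-expansion is the unique maximal element. Peeling it first places the $\exists$ inference with witness $\alpha$ at the bottom of the {\LK}-proof and the $\forall$ inference for $\alpha$ above it, and the conclusion of that $\forall$ inference then contains $\dual{P}(\alpha)$, violating its side condition; equivalently, after this step the list contains the new tree $\dual{P}(\alpha)$ alongside the untouched $\forall x\, P(x) +^\alpha P(\alpha)$, so the eigenvariable condition (condition~4 of Definition~\ref{def:exp_preproof}) fails, your invariant is not maintained, and the induction hypothesis is inapplicable. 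Your justification that ``maximality guarantees the eigenvariable condition via clauses~1 and~4'' has the logic backwards: those clauses place the $\exists$-expansions and cuts mentioning $\alpha$ \emph{above} $+^\alpha$, which is precisely why their rules must be applied higher up and hence why one peels minimal elements at the bottom. The repair is to follow the paper: choose a $<_\calP$-minimal quantifier expansion or cut (it exists by acyclicity and, for quantifier expansions, is top-level), group all occurrences sharing its eigenvariable or shallow formula using weak regularity, and then use minimality --- not maximality --- to verify that the premises are again expansion proofs.
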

\begin{proof}
It is enough to construct an {\LKE}-proof $\pi$ of $\calP$, as then $\Shallow(\pi)$ is
a proof of $\Shallow(\calP) = \Gamma$. The construction will be carried out
by induction on the number of nodes in $\calP$. The inductive statement we are going to prove  is: if $\calP$ is an expansion proof, then there is a $\LKE$-proof  $\pi$ of $\calP$.

If $\calP$ contains only literals, the thesis is obvious.

If $\calP = \calP', E_1 \lor E_2$ for some $\calP'$, $E_1$ and $E_2$,
then  $\calP', E_1, E_2$ is a strictly smaller expansion proof.
By the induction hypothesis we obtain an {\LKE}-proofs $\pi'$ of $\calP', E_1, E_2$
from which a proof of $\calP$ is obtained by an $\lor$-inference.
For $\calP = \calP', E_1 \land E_2$, proceed analogously.

So assume there are no top-level conjunctions or disjunctions. We observe that for any non-top-level quantifier expansion there is some top-level quantifier expansion that dominates it, and is smaller than it, according to $<_\calP$. Thus, the $<_\calP$-minimal quantifier expansions  are all top-level. By the acyclicity of $<_\calP$ there must be a $<_\calP$-minimal quantifier expansion or cut. In the case the $<_\calP$-minimal expression is a quantifier expansion, then it is a top-level one.

 For
the case of cut, we proceed as follows: let  $C=\{ E_1, \forall x\, A +^\alpha F_1 \}$ be a cut minimal with respect to $<_\calP$ (if $\Shallow(C)$ does not begin with a quantifier the argument is easier). Then $\forall x\, A, A[\alpha/x]\in \Br(C), s$, which, by weak regularity of $\calP$, forces every element of $\calP$ containing $+^{\alpha}$ to be a cut of the shape $\{E_i, \forall x\, A +^\alpha F_i\}$.  Then, 
 we can write 
$$\calP=\{E_1, \forall x\, A +^\alpha F_1\},\ldots, \{E_{n},\forall x\, A +^\alpha F_n\},\calP'$$
where $+^{\alpha}$ does not occur in $\calP'$. Now, 
$$\Deep(\calP)=(\Deep(E_{1})\land \Deep(F_{1}))\lor \ldots \lor (\Deep(E_{n})\land \Deep(F_{n}))\lor \Deep(P')$$
Therefore, 
$$\Deep(E_1,\ldots, E_{n}, \calP')=\Deep(E_{1})\lor\ldots\lor \Deep(E_{n})\lor \Deep(\calP')$$
$$\Deep(\forall x\, A +^\alpha F_1,\ldots,\forall x\, A +^\alpha F_{n}, \calP')=\Deep(F_{1})\lor\ldots\lor \Deep(F_{m})\lor \Deep(\calP')$$ are tautologies.  To prove weak regularity of $E_1,\ldots, E_{n}, \calP'$ and $\forall x\, A +^\alpha F_1,\ldots, \forall x\, A +^\alpha F_{n}, \calP'$, we observe that $\Br(E_1,\ldots, E_{n}, \calP')$ and $\Br(\forall x\, A +^\alpha F_1,\ldots, \forall x\, A +^\alpha F_{n}, \calP')$ are contained in $\Br(\calP)$; the only issue is when a branch belongs to $\Br(\forall x\, A +^\alpha F_i)$.  
 In this case, in order to show weak regularity, we have to show that for every branch $b = s, \forall x\, A, A[\alpha/x], s'$, we have that $s$ is empty  and the branch $b$ is the branch of a tree. By weak regularity of $\calP$, the same branch $b$ was in $\calP$ a branch of a cut. Thus, it is enough to observe that $b$ belongs to some cut $\{E_{j}, \forall x\,A +^\alpha F_j\}$, for some $j=i,\dots,n$, otherwise $b$ would belong to some cut in $\calP'$,  impossible by construction of $\calP'$.
 Furthermore,  the orderings of the expansions and cuts of $E_1,\ldots, E_{n}, \calP'$ and $\forall x\, A +^\alpha F_1,\ldots, \forall x\, A +^\alpha F_{n}, \calP'$ are suborderings of $<_{\calP}$, hence also acyclic. Last, $\Shallow(E_1,\ldots, E_{n}, \calP')$ and $\Shallow(\forall x\, A +^\alpha F_1,\ldots,\forall x\, A +^\alpha F_{n}, \calP')$ contain the same free variables of $\Shallow(\calP)$ plus those of $\forall x A$; now, no $\forall$-expansion $+^{\beta}$ of $\calP$ can have $\beta$ occur in $\forall x A$, otherwise  $+^{\beta}<_{\calP}C$, contradicting the minimality assumption on $C$, so we have that the eigenvariable condition holds.  Then, by
the induction hypothesis we obtain {\LKE}-proofs $\pi_1,\pi_2$ of $E_1,\ldots, E_{n}, \calP'$
and $\forall x\, A +^\alpha F_1,\ldots, \forall x\, A +^\alpha F_{n}, \calP'$ respectively, from which a proof of $\calP$ is obtained
by a cut.

For the case of the minimal node being an $\exists$-expansion, let  $ \exists x\, A\cdots +^t E\cdots $ be an expansion tree of $\calP$ such that $+^{t}$ is minimal with respect to $<_\calP$.  As we said, $+^t$ occurs at top-level. We move all top-level $+^t$  at the end of the lists of $\exists$-espansions relative to the corresponding top level formula. In this way, we can rewrite $\calP$ as
$$\exists x\, A +^{t_1} E_1 \cdots +^{t_{n}} E_n, \ldots, \exists x\, A +^{s_1} F_1 \cdots +^{s_{m}} F_m, \calP'$$
in such a way that: $\Shallow(E)\neq \exists x A$ for every expansion tree $E$ in $\calP'$; there are $i, \ldots, j$ such that $t_{i}=\ldots=t_{n}=t, \ldots, s_{j}=\ldots=s_{m}=t$ and $i'<i, \ldots, j'<j$ implies $t_{i'}\neq t, \ldots, s_{j'}\neq t$.
Let 
$$\calQ=\calP', \exists x\, A +^{t_1} E_1 \cdots +^{t_{i}} E_i, \ldots, \exists x\, A +^{s_1} F_1 \cdots +^{s_{j}} F_j, E_{i+1} \ldots, E_{n}, \ldots, F_{j+1}, \ldots, F_{m}$$
Then
$\Deep(\calP)=\Deep(\calQ)$, so they are both tautologies.   To prove weak regularity of $\calQ$, we first observe that every  $s\in\Br(\calQ)$ is either already contained in $\Br(\calP)$ or $s\in\Br(E_{k})$ or $s\in\Br(F_{h})$, with $i+1\leq k\leq n$ and $j+1\leq h\leq m$ and  $\exists x A, s$ in $\Br(\calP)$. Thus the only problematic case  is when $b_{1}$ belongs to $ \Br(\calP)$ but not to the branches of the new trees of $\calQ$, while, for instance, $b_{2}\in \Br(E_{k})$, with $i+1\leq k\leq n$. We show that it cannot be the case that $b_{1}=s, \forall y B, B[\alpha/y], s'$ and $b_{2}=r, \forall y C, C[\alpha/y], r'$: assume for the sake of contradiction that it is. Since $\exists x A, r, \forall y C, C[\alpha/y], r'\in \Br(\calP)$, we have $s=\exists x A, r$, by weak regularity of $\calP$. Therefore, $b_{1}\notin\Br(\calP')$, so without loss of generality, say $s=\exists x A, A[s_{l}/x], s''$. But $r=A[t_{k}/x], r''$ and since $s_{l}\neq t= t_{k}$ by construction, we have a contradiction.
\noindent Furthermore,  the orderings of the expansions and cuts of  $ \calQ$ are suborderings of $<_{\calP}$, hence also acyclic. Last, no $\forall$-expansion $+^{\beta}$ of $\calQ$ can have $\beta$ occur in $A[t/x]$, otherwise either $\beta$ occurs in $t$ or  $\beta$ already occurs in $\exists x A$, against the assumptions  on minimality of $+^{t}$, in the first case, agains the assumption on $\calP$ having the eigenvariable condition, in the second case. 
 Then, by
the induction hypothesis we obtain a {\LKE}-proof $\pi$ of $\calQ$, from which a proof of $\calP$ is obtained
by  the second $\exists$ rule and a  number of applications of the first $\exists $ rule, taking care of the rewriting of $\calP$ that we made.

For the case of the minimal node being a $\forall$-expansion, let  $ \forall x\, A +^\alpha E $ be an expansion tree of $\calP$ such that $+^{\alpha}$ is minimal with respect to $<_\calP$.  As we said, $+^\alpha$ occurs at top-level. Then $\forall x\, A, A[\alpha/x], s\in \Br( \forall x\, A +^\alpha E )$, which, by weak regularity of $\calP$, forces every element of $\calP$ containing $+^{\alpha}$ to be  an expansion tree of the shape $\forall x\, A +^{\alpha} F$.  Then, 
 we can write 
$$\calP=\forall x\, A +^\alpha F_1,\ldots, \forall x\, A +^\alpha F_n,\calP'$$
where $+^{\alpha}$ does not occur in $\calP'$. Now, 
$\Deep(\calP)=\Deep(F_1,\ldots,  F_n,\calP')$, so they are both tautologies.   To prove weak regularity of $F_1,\ldots,  F_n,\calP'$, it is enough to note that every  $s\in\Br( F_1,\ldots, F_{n}, \calP')$ is either already contained in $\Br(\calP)$ or $s\in F_{k}$, for $1\leq k\leq n$ and $\forall x A, s$ is in $\Br(\calP)$. Thus the only problematic case  is when $b_{1}$ belongs to  $\Br(\calP')$ but not to the branches of the new trees of $\calQ$, while, for instance, $b_{2}\in \Br(F_{k})$, with $1\leq k\leq n$. We show that it cannot be the case that $b_{1}=s, \forall y B, B[\beta/y], s'$ and $b_{2}=r, \forall y C, C[\beta/y], r'$: assume for the sake of contradiction that it is. We first notice that $r=A[\alpha/x], r''$.
 Moreover, since $\forall x A, r, \forall y C, C[\beta/y], r'\in \Br(\calP)$, we have $s=\forall x A, r$, by weak regularity of $\calP$. Therefore, $b_{1}\notin\Br(\calP')$ and we have a contradiction.
Furthermore,  the orderings of the expansions and cuts  $ F_1,\ldots,  F_{n}, \calP'$ are suborderings of $<_{\calP}$, hence also acyclic. Last, no $\forall$-expansion $+^{\beta}$ of $F_1,\ldots,  F_n,\calP'$ can have $\beta$ occur in $A[\alpha/x]$, otherwise either $\beta=\alpha$ or  $\beta$ already occurs in $\forall x A$, against the assumptions  on $+^{\alpha}$ not occurring in $\calP'$ or against weak regularity of $\calP'$, in the first case, against the assumption on $\calP$ having the eigenvariable condition, in the second case. 
 Then, by
the induction hypothesis we obtain  a {\LKE}-proof $\pi$ of $F_1,\ldots, F_{n},\calP'$, from which a proof of $\calP$ is obtained
by  the $\forall$ rule, because ${\alpha}$ does not occur in $\Shallow(\calP)$.

\end{proof}
%

\section{Cut-Elimination}\label{sec:cutelim}
Given any expansion proof $\calP$ there is always a cut-free expansion proof of $\Shallow(\calP)$: by the soundness Theorem \ref{thm:soundness}, transform $\calP$ into an $\LK$- proof, then perform Gentzen cut-elimination and obtain a cut-free proof; finally map it back to a cut-free expansion proof by the completeness Theorem \ref{thm:completeness}. Nevertheless, the interest of expansion proofs is that they allow to investigate the combinatorics and the computational meaning of cut-elimination, with the additional advantage of factoring out tedious structural rules such as cut-permutations.
In this section we indeed define a natural reduction system for expansion
proofs, such that the normal forms are cut-free expansion proofs.
We prove weak normalization and discuss the status of other properties such
as strong normalization and confluence in comparison to other systems from the literature. 

\subsection{Cut-Reduction Steps}\label{sec:cut_reduction_steps}
In the following, by a substitution $\sigma$ we mean as usual a finite map from variables to first-order terms, and if $e$ is any syntactic expression, $e\sigma$ denote the expression resulting from $e$ after simultaneous replacement of each variable $x$ in the domain of $\sigma$ with $\sigma(x)$.
To make
sure that the application of a substitution transforms expansion trees  into expansion trees  we restrict the set of permitted
substitutions: a substitution $\sigma$ can only be applied to an expansion tree
 $E$, if it acts on the eigenvariables of $E$ as a renaming, more
precisely: if $+^{\alpha}$ is a $\forall$-expansion of $E$, then  $\sigma(\alpha)$ is a variable.
Otherwise $\sigma$ would destroy the $\forall$-expansions.
%

While  presenting our cut-reduction steps, we have to take into account weak regularity: cut-reduction will {\em duplicate}
sub-proofs, making it necessary to discuss the renaming of variables, as
in the case of the sequent calculus. We will carefully indicate,
in the case of a duplication, which subtrees should be subjected to a
variable renaming and which variables are to be renamed.

%
%
\begin{definition}[Cut-Reduction Steps]
The cut-reduction steps, relating expansion proofs $\calQ,\calQ'$ and
written $\calQ\mapsto\calQ'$,
are:\\ \\
\textbf{Quantifier Step}
\begin{small}
$$\{  \exists x\, A +^{t_1} E_1 \cdots +^{t_n} E_n , \forall x\, \bar{A} +^{\alpha_{1}} F_{1} \}, \ldots, \{  \exists x\, A +^{t_{p}} E_p \cdots +^{t_{l}} E_l , \forall x\, \bar{A} +^{\alpha_{q}} F_{q} \}, \calP$$
$$\mapsto$$ $$\{ E_1, F_{1}\eta_1\sigma_{1}\},\ldots, \{ E_1, F_{q}\eta_1\sigma_{1}\},\ldots, \{E_{l}, F_{1}\eta_l\sigma_{l} \},\ldots,   \{E_{l}, F_{q} \eta_l\sigma_{l} \}, \calP, \calP\eta_1\sigma_{1}, \ldots, \calP\eta_l  \sigma_{l}
$$
\end{small}where: $\sigma_1=[{t_1}/\alpha_{1}\dots t_{1}/\alpha_{q}], \ldots,\sigma_{l}= [{t_l}/\alpha_{1}\dots t_{l}/\alpha_{q}]$; the $\forall$-expansions $+^{\alpha_{1}}, \dots, +^{\alpha_{q}}$ do not occur in $\calP$; no cut in $\calP$ has shallow formula $\exists x A $; $\eta_{1}, \dots, \eta_{l}$ are renamings to fresh variables of the eigenvariables $\beta$ of $\calP, F_{1},\ldots, F_{q}$ such that for some $1\leq i\leq q$ and occurrence of $+^{\alpha_{i}}$ we have    $+^{\alpha_{i}}<_{\calP_{1}}+^{\beta}$.
\\ \\
\textbf{Propositional Step}
\[
  \{ E_1 \lor F_1, E'_1\land F'_1 \},\dots, \{ E_m \lor F_m, E'_m\land F'_m \}, \calP\ \mapsto\ \{ E_1, E'_1 \}, \{ F_1, F'_1 \},\dots, \{ E_m, E'_m \}, \{ F_m, F'_m \}, \calP
\]
where $\Shallow(E_1 \lor F_1)=\dots=\Shallow(E_m \lor F_m)$ and for every $C$ in $\calP$, $\Shallow(C)\neq \Shallow(E_1 \lor F_1)$.

\textbf{Atomic Step}
\[
\{ A, \dual{A} \}, \calP\ \mapsto\ \calP \quad \mbox{for an atom $A$.}
\]
\end{definition}
These reduction rules are very natural:
atomic cuts are simply removed and propositional cuts are decomposed. 
The reduction of a quantified cut is, when thinking about cut-elimination in the
sequent calculus, intuitively  appealing:  existential cuts are replaced
by  cuts on a disjunction of the instances. The fact that at least one of these rules that can be applied to any expansion proof containing cuts will be proved in Theorem \ref{thm:weak_normalization}. 

\noindent One may think that the quantifier reduction rule already incorporates a reduction strategy, because several cuts are reduced in parallel. However, a strategy implies a choice and there is no real choice here: when the main eigenvariable occurs in other cuts,  all these cuts have to be regarded as linked together, otherwise reducing one of them would destroy the soundness of the others. Moreover, all the cuts with the same shallow formula must be reduced, otherwise weak regularity would not be preserved.

 \noindent The reason why only the eigenvariables greater than some $\alpha_{i}$  are renamed is that these are the variable indirectly affected by the substitutions $[t_{i}/\alpha]$.  Semantically, the witnesses that these variables represent are influenced by the substitutions, so for each of them a new collection of eigenvariables is created.


\noindent One surprising aspect of the quantifier-reduction rule is the presence of $\calP$, without
a substitution applied, on the right-hand side of the rule: in general, $\calP$ will contain $\alpha$, and
one would expect that occurrences of $\alpha$ are redundant (since $\alpha$ is ``eliminated''
by the rule). The reason why this occurrence of $\calP$ must be present is that $\alpha$ is not,
in fact, eliminated since some $t_i$ might contain it. This situation occurs, for example, when
translating from a regular $\LK$-proof where an $\exists$-quantifier may be instantiated by any
term, and we happen to choose an eigenvariable from a different branch of the proof. In the 
sequent calculus, this situation can in principle be avoided by using a different witness for the
$\exists$-quantifier, but realizing such a renaming in expansion proofs is technically non-trivial
due to the global nature of eigenvariables. For simplicity of exposition, we therefore allow this
somewhat unnatural situation and leave a more detailed analysis for future work. Note that this $n+1$-st
copy is reminiscent of the duplication behavior of the $\varepsilon$-calculus~\cite{Hilbert39Grundlagen2},
see~\cite{Moser06Epsilon} for a contemporary exposition in English.
\begin{remark}[On Bridges]
We note that this phenomenon also occurs in the proof forests of~\cite{Heijltjes10Classical},
where it is an example of {\em bridge}. Bridges, when ignored, can generate cycles in the dependency relation $<_{\calQ}$. In \cite{Heijltjes10Classical},
they are addressed with a {\em pruning reduction} that eliminates them and the weak normalization proof of
that system depends on this pruning. In our setting, we do not need additional machinery
for proving weak normalization (see Section~\ref{sec:weak_normalization}). The reason is our renaming policy: while in \cite{Heijltjes10Classical} every occurrence of every variable above $\alpha$ in the dependency relation $<_{\calQ}$ is renamed, in our case only some of those occurrences are renamed, namely those which are not in $t_{1}, \ldots, t_{n}$ or in $E_{1}, \ldots, E_{n}$. In such a way, bridges are broken by our cut-reduction step, so that cycles in the dependency relation cannot be generated. Furthermore,
the counterexample to strong normalization from~\cite{Heijltjes10Classical} also contains
a bridge; we investigate (a translation of) this counterexample in
Section~\ref{sec:strong_normalization} and find that it is not a counterexample in our setting for the reasons explained.
\end{remark}
\begin{example} We now consider an example of cut reduction steps, in particular when an eigenvariable $\gamma$ occurs more than once. 

$$\{ \forall x \exists y P(x,y) +^\alpha \exists y P(\alpha,y) +^{f(\alpha, \beta)} P(\alpha, f(\alpha,\beta)), \exists x\forall y\overline{P}(x,y) +^q \forall y \overline{P}(q,y) +^\gamma \overline{P}(q, \gamma)\}, $$

Ê$$\{ \forall x\exists y P(x,y) +^{\beta} \exists y P(\beta,y) +^{g(\alpha, \beta)} P(\beta, g(\alpha,\beta))  , \exists x \forall y\overline{P}(x,y) +^q \forall y \overline{P}(q,y) +^\gamma \overline{P}(q,\gamma)\}$$

$$\mapsto$$

$$ \{\exists y P(q,y) +^{f(q, q)} P(q, f(q,q)),  \forall y\overline{P}(q,y) +^\gamma \overline{P}(q, \gamma)\},$$

$$ \{\exists y P(q,y) +^{g(q, q)} P(q, g(q,q)),  \forall y\overline{P}(q,y) +^\gamma \overline{P}(q, \gamma)\},$$

$$ \{\exists y P(q,y) +^{f(q, q)} P(q, f(q,q)),  \forall y\overline{P}(q,y) +^\gamma \overline{P}(q, \gamma)\},$$

$$ \{\exists y P(q,y) +^{g(q, q)} P(q, g(q,q)),  \forall y\overline{P}(q,y) +^\gamma \overline{P}(q, \gamma)\},$$

We see above two identical pairs of cuts, due to the first expansion containing two times the same tree. Reducing this last expansion, we obtain $8$ occurrences of  $\{P(q, f(q,q)), \overline{P}(q, f(q,q)\}$ and $\{P(p, f(p,p)), \overline{P}(p, f(p,p)\}$.

\end{example}

A simple property we are going to need is that substitution commutes with $\Deep(\cdot)$, $\Shallow(\cdot)$ and $\Br(\cdot)$.
\begin{lemma}\label{lemma-subs}
For every  substitution $\sigma$, $$\Shallow(E\sigma)=\Shallow(E)\,\sigma$$ 
$$\Deep(E\sigma)=\Deep(E)\,\sigma$$ 
$$\Br(E\sigma)=\{s\,\sigma\ |\ s\in\Br(E)\}$$
\end{lemma}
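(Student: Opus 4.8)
The plan is to prove the three identities simultaneously by structural induction on the expansion tree $E$, which is the natural approach since $\Shallow$, $\Deep$, and $\Br$ are all defined by mutual structural recursion on the same inductive clauses from Definition~\ref{def:exptree}. The key observation making this routine is that a permitted substitution $\sigma$ commutes with each tree constructor up to the action of $\sigma$ on the labels: applying $\sigma$ to $E_1 \circ E_2$ gives $(E_1\sigma)\circ(E_2\sigma)$, applying it to $\exists x\,A +^{t_1}E_1\cdots+^{t_n}E_n$ gives $\exists x\,(A\sigma) +^{t_1\sigma}(E_1\sigma)\cdots+^{t_n\sigma}(E_n\sigma)$, and similarly for the $\forall$-case where $\sigma(\alpha)$ is again a variable by the restriction on permitted substitutions. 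Here one must be slightly careful that $A\sigma$ remains consistent with the shallow formulas of the $E_i\sigma$, but this follows from the inductive hypothesis for $\Shallow$ applied to the subtrees.

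First I would treat the base case: for a literal $L$, all three functions return data built directly from $L$, and $L\sigma$ is again a literal, so $\Shallow(L\sigma)=L\sigma=\Shallow(L)\sigma$, and likewise for $\Deep$ and $\Br$, noting that $\Br(L)\sigma$ as a single-element set is $\{L\sigma\}=\Br(L\sigma)$. Next I would handle the binary connective case: using the induction hypothesis on $E_1$ and $E_2$, each of the three identities reduces to distributing $\sigma$ over $\circ$, which is immediate since $\sigma$ does not touch the connective symbol. For the $\exists$-case, $\Shallow$ is unaffected by the subtrees so one just checks $\exists x\,A$ maps to $\exists x\,(A\sigma)$; for $\Deep$ one pushes $\sigma$ through the big disjunction $\Lor_{i=1}^n$ and applies the hypothesis to each $\Deep(E_i)$; for $\Br$ one verifies that prepending the label $\exists x\,A$ commutes with applying $\sigma$ pointwise to the branch lists. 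The $\forall$-case is analogous, with the eigenvariable-as-renaming condition guaranteeing that $\Shallow(E_0\sigma)=A\sigma[\sigma(\alpha)/x]$-style bookkeeping stays coherent.

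The only mildly delicate step, which I expect to be the main obstacle, is the $\Br$ identity in the quantifier cases, since $\Br$ manipulates lists that interleave object formulas with the integers $1,2$. One must check that $\sigma$ acts only on the formula components and leaves the integer markers fixed, so that $\{s\,\sigma \mid s\in\Br(E)\}$ is computed correctly branch-by-branch; this is where being explicit about how $\sigma$ extends to lists (acting componentwise, trivially on integers) matters. Once that convention is fixed, the computation is purely mechanical. I would therefore state the componentwise action of $\sigma$ on branch-lists at the outset and then carry the induction through, remarking that the three identities do not genuinely interact except through the shared use of $\Shallow$ in the $\Br$ clause for binary connectives, so the simultaneous induction is a convenience rather than a necessity. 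The whole argument is short enough that the paper's one-line "By induction" style proof is fully justified.
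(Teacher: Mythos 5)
Your proposal is correct and matches the paper's proof, which is simply ``By a straightforward induction on $E$'': your simultaneous structural induction, with the observation that $\sigma$ commutes with each tree constructor and acts pointwise (trivially on the integer markers) on branch lists, is exactly the routine argument the paper leaves implicit. The extra care you take with the $\Br$ clause and the eigenvariable-renaming restriction is sound but does not change the substance of the proof.
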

\begin{proof}
By a straightforward induction on $E$.
\end{proof}

We now prove that the cut-reduction relation is sound.

\begin{lemma}[Soundness of Cut-Reduction]\label{lem:red_pres_proof}
If $\calP_{1}\mapsto\calP_{2}$ and $\calP_{1}$ is an expansion proof, then $\calP_{2}$ is an expansion
proof. Furthermore, $\Shallow(\calP_{1})=\Shallow(\calP_{2})$.
\end{lemma}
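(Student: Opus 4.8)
The plan is to prove Lemma~\ref{lem:red_pres_proof} by cases on which of the three cut-reduction steps transforms $\calP_1$ into $\calP_2$, and within each case to verify the four defining conditions of an expansion proof from Definition~\ref{def:exp_preproof} --- weak regularity, acyclicity, validity, and the eigenvariable condition --- together with the preservation of $\Shallow$. For the atomic and propositional steps, all four conditions should be routine: the atomic step merely deletes a cut $\{A,\dual A\}$, so $\Deep(\calP_2)$ is obtained from $\Deep(\calP_1)$ by dropping the tautological conjunct $A \land \dual A$, leaving a tautology; the branches, orderings, and eigenvariables of $\calP_2$ are all inherited from $\calP_1$, so weak regularity, acyclicity, and the eigenvariable condition are immediate. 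The propositional step decomposes each cut $\{E_i \lor F_i, E'_i \land F'_i\}$ into $\{E_i, E'_i\}, \{F_i, F'_i\}$; here I would observe that $\Deep$ is unchanged up to the propositional equivalence $(a\lor b)\land(a'\land b') \leftrightarrow (a\land a')\land(b\land b')$ distributed across the case split, that $\Br(\calP_2)\subseteq\Br(\calP_1)$ so weak regularity survives, and that the dependency relation of $\calP_2$ is a subrelation of $<_{\calP_1}$, hence acyclic. In both cases $\Shallow(\calP_2)=\Shallow(\calP_1)$ since no expansion tree outside the cuts is touched and the cut formulas do not contribute to $\Shallow(\calP)=\Shallow(\calE)$.

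The substantive case is the quantifier step, and I would handle it using Lemma~\ref{lemma-subs} (substitution commutes with $\Deep$, $\Shallow$, $\Br$) as the main computational tool. For \emph{validity}, I would compute $\Deep(\calP_2)$ and show it follows propositionally from $\Deep(\calP_1)$: the original cuts contribute $\bigvee_k \bigl(\Deep(E_k)\bigr) \land \Deep(\forall x\,\bar A +^{\alpha_i} F_i)$, and after reduction each pairing $\{E_k, F_j\eta\sigma\}$ contributes $\Deep(E_k)\land \Deep(F_j)\sigma$ (using the lemma to push $\sigma$ through $\Deep$ and the fact that renaming $\eta$ preserves validity), while the extra copies $\calP,\calP\eta_1\sigma_1,\ldots$ account for the substituted deep formulas; the key is that the substitution $\sigma_i=[t_i/\alpha_1\cdots t_i/\alpha_q]$ instantiates the eigenvariables by exactly the witnessing terms $t_i$, so the resulting quantifier-free sequent is a substitution instance of a tautology and hence again a tautology. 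For the \emph{eigenvariable condition}, I would argue that the fresh renamings $\eta_l$ remove any clash, and that since $\sigma_i$ acts on the $\alpha_j$ as a renaming only when $t_i$ is a variable (the permitted-substitution restriction from the start of Section~\ref{sec:cut_reduction_steps}), no eigenvariable of $\calP_2$ can appear in $\Shallow(\calP_2)=\Shallow(\calP_1)$.

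The hard part will be \emph{acyclicity} of $<_{\calP_2}$, and this is precisely where the paper's renaming policy (the ``bridges'' discussion in Remark~1) does its work. The plan is to show that any cycle in $<_{\calP_2}$ would pull back to a cycle in $<_{\calP_1}$, contradicting the assumed acyclicity. The delicate point is that the reduct reintroduces an un-substituted copy of $\calP$ alongside the renamed copies $\calP\eta_l\sigma_l$, so $\alpha_i$ may still occur (inside some $t_i$) in the un-renamed $\calP$; I would need to verify that the only new dependencies created are those mediated by the substituted terms $t_i$ and that, because exactly the eigenvariables $\beta$ with $+^{\alpha_i} <_{\calP_1} +^{\beta}$ are renamed to fresh variables, no new edge can close a loop back through $+^{\alpha_i}$. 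Concretely, I would trace each clause of Definition~\ref{defi-dependency} for $<_{\calP_2}$: clauses 2 and 3 (domination and cut-to-expansion edges) are preserved sub-structurally; the genuinely new edges come from clause 1 (an eigenvariable of some renamed $\beta$ appearing in a term), and the freshness of $\eta_l$ guarantees these land only on freshly introduced copies, so a cycle through them would project down to a shorter $<_{\calP_1}$-cycle. For \emph{weak regularity} I would reuse the Lemma~\ref{lemma-ins} style observation that $\Br(\calP_2)$ is, up to the substitutions $\eta_l\sigma_l$ and by Lemma~\ref{lemma-subs}, built from branches already occurring in $\Br(\calP_1)$, so any two branches witnessing the same eigenvariable must have come from branches of the same kind in $\calP_1$; the freshness of the renamed variables ensures no spurious identification of eigenvariables across distinct copies. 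I expect the acyclicity argument to require the most care, as it is the only place where the subtle asymmetry of the renaming policy --- renaming variables above $\alpha_i$ but not the terms $t_i$ themselves --- is essential.
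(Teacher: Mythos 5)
Your overall plan coincides with the paper's proof: the same case split over the three reduction steps, Lemma~\ref{lemma-subs} as the computational tool, and acyclicity handled by pulling a purported $<_{\calP_2}$-cycle back to a $<_{\calP_1}$-cycle using the asymmetry of the renaming policy. But two of your steps, executed as written, would fail. The first is validity: $\Deep(\calP_2)$ is \emph{not} a substitution instance of a tautology, and no single instance of $\Deep(\calP_1)$ yields it. The paper instead extracts from $\Deep(\calP_1)$ two valid consequences, roughly $\bigvee_k\Deep(E_k)\lor\Deep(\calP)$ and $\bigvee_h\Deep(F_h)\lor\Deep(\calP)$, takes substitution instances of the second under every $\eta_i\sigma_i$ (valid because quantifier-free validity is closed under substitution), and then recombines them by a truth-assignment argument: if $\Deep(\calP)$ and all $\Deep(\calP)\eta_i\sigma_i$ are false under a given assignment, the first consequence makes some $\Deep(E_k)$ true and the instantiated second one makes some $\Deep(F_h)\eta_k\sigma_k$ true, so the disjunct $\Deep(E_k)\land\Deep(F_h)\eta_k\sigma_k$ of $\Deep(\calP_2)$ is true. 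Thus $\Deep(\calP_2)$ is a propositional consequence of \emph{several} substitution instances of consequences of $\Deep(\calP_1)$; "instance of a tautology" mischaracterizes the structure of the argument you would have to give.

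The second gap is in weak regularity: your freshness argument only disposes of branch pairs whose eigenvariable was renamed. The dangerous case is two branches lying in \emph{distinct} copies $\calP\eta_i\sigma_i$ and $\calP\eta_j\sigma_j$ (or $F_h\eta_i\sigma_i$ and $F_{h'}\eta_j\sigma_j$) with $i\neq j$, ending in the same \emph{un-renamed} eigenvariable $\beta$. Here Definition~\ref{def:exp_preproof} demands that the prefixes be literally equal, i.e.\ that no variable moved by $\eta_i\sigma_i$ or $\eta_j\sigma_j$ occurs in the common source prefix $s, \forall x\, B$; freshness alone says nothing about this. The paper proves it by a dependency argument: if some $\gamma$ in the domain of $\eta_k\sigma_k$ occurred in $s,\forall x\, B$, then $+^{\alpha_k}<_{\calP_1}+^{\gamma}<_{\calP_1}v<_{\calP_1}+^{\beta}$ for the expansion or cut $v$ dominating $+^{\beta}$, which would force $\beta$ itself into the domain of $\eta_k$, a contradiction --- another place, besides acyclicity, where the renaming policy must be invoked through the dependency relation rather than through freshness. (A minor further point: your justification of the eigenvariable condition via the permitted-substitution restriction is off --- $\sigma_i$ sends $\alpha_j$ to $t_i$, which need not be a variable; the application is legal because $+^{\alpha_j}$ occurs neither in $\calP$ nor in the $F_h$, and the condition itself follows simply from $\Shallow(\calP_2)=\Shallow(\calP_1)$ together with the freshness of the new eigenvariables.)
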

\begin{proof}
We only give the proof for the quantifier cut-reduction step; the proof for the other reduction steps
is analogous and simpler.
Let $\sigma_1, \ldots, \sigma_{l}$ be respectively the substitutions $[{t_1}/\alpha_{1}\dots t_{1}/\alpha_{q}], \ldots, [{t_l}/\alpha_{1}\dots t_{l}/\alpha_{q}]$ and
assume 
\begin{small}
$$\calP_{1}=\{  \exists x\, A +^{t_1} E_1 \cdots +^{t_n} E_n , \forall x\, \bar{A} +^{\alpha_{1}} F_{1} \}, \ldots, \{  \exists x\, A +^{t_{p}} E_p \cdots +^{t_{l}} E_l , \forall x\, \bar{A} +^{\alpha_{q}} F_{q} \}, \calP$$
$$\mapsto$$ $$\{ E_1, F_{1}\eta_1\sigma_{1}\},\ldots, \{ E_1, F_{q}\eta_1\sigma_{1}\},\ldots, \{E_{l}, F_{1}\eta_l\sigma_{l} \},\ldots,   \{E_{l}, F_{q} \eta_l\sigma_{l} \}, \calP, \calP\eta_1\sigma_{1}, \ldots, \calP\eta_l  \sigma_{l}
$$
$$=\calQ$$
\end{small}where the $\forall$-expansions $+^{\alpha_{1}}, \dots, +^{\alpha_{q}}$ do not occur in $\calP$, no cut in $\calP$ has shallow formula $\exists x A $ and $\eta_{1}, \dots, \eta_{l}$ are renamings to fresh variables of the eigenvariables $\beta$ of $\calP, F_{1},\ldots, F_{q}$ such that for some $1\leq i\leq q$ and occurrence of $+^{\alpha_{i}}$ we have    $+^{\alpha_{i}}<_{\calP_{1}}+^{\beta}$. We observe that no $\alpha_{k}$ is in the domain of any $\eta_{j}$, otherwise for some $i$ we would have $+^{\alpha_{i}}<_{\calP_{1}}+^{\alpha_{k}}$, which is only possible when $+^{\alpha_{i}}<_{\calP_{1}}+^{\gamma} <_{\calP_{1}} C_{k}$ where  $\gamma$ is an  eigenvariable and $C_{k}$ is the $k$-th of the displayed cuts of $\calP_{1}$. But then $\gamma$ would occur in $\exists x A$, hence also $+^{\alpha_{i}}<_{\calP_{1}}+^{\alpha_{i}}$, which contradicts the acyclicity of $<_{\calP_{1}}$. We now show that no variable $\gamma$ in the domain of any $\eta_{k}$ can occur in any $\overline{A}[\alpha_{j}/x]=\Shallow(F_{j})$: assume by contradiction that $+^{\alpha_{i}} <_{\calP_{1}} +^{\gamma}$ and $\gamma$ occurs in $\overline{A}[\alpha_{j}/x]$. Then, letting $C_{i}$ be the $i$-th of the displayed cuts of $\calP_{1}$, we immediately  have by Definition \ref{defi-dependency} the contradiction $C_{i}<_{\calP_{1}}+^{\alpha_{i}}<_{\calP_{1}}+^{\gamma}<_{\calP_{1}}C_{i}$, the last relation due to $\gamma\neq \alpha_{j}$ as shown above, thus $\gamma$ occurring in $\forall x \overline{A}$ and thus in $\Shallow(C)=\exists x A$. Therefore, by Lemma \ref{lemma-subs}, for every $1\leq h\leq q$, we have $\Shallow(F_{h}\eta_{i}\sigma_{i})=\Shallow(F_{h})\eta_{i}\sigma_{i}= \Shallow(F_{h})\sigma_{i}=\overline{A}[t_{i}/x]$, the last equality due to $\alpha_{j}$ for $j\neq i$ not occurring in $\overline{A}$, otherwise $+^{\alpha_{i}}<_{\calP_{1}}+^{\alpha_{j}}$. We have thus shown that the displayed cuts of $\calQ$ are indeed between dual formulas.

We now prove several properties, namely that $\calQ$ is weakly regular, $\Deep(Q)$ is valid, $<_{\calQ}$ is acyclic and $\Shallow(\calQ)$ does not contain eigenvariables, which means that $\calQ$ is an expansion proof.

\textbf{Weak Regularity}. 
Letting $\eta_{0}$ and $\sigma_{0}$ be the empty substitution, by Lemma \ref{lemma-subs} any branch in $\Br(\calQ)$ is of the form $p\eta_{i}\sigma_{i}$, where $i\in\{0, 1, \ldots, n\}$ and $$p\in \mathsf{B}:=\Br(\calP)\cup \Br(E_{1})\cup\ldots \cup \Br(E_{l})\cup\ldots \cup \Br(F_{1})\cup\ldots \cup \Br(F_{q})$$
Thus, for each $p\in \mathsf{B}$, either $p\in \Br(\calP)$ or $\forall x \overline{A}, p\in\Br(\calP_{1})$ or $\exists x {A}, p\in\Br(\calP_{1})$.
 Assume
$$s \eta_{i}\sigma_{i}, \forall x B\, \eta_{i}\sigma_{i}, B[\beta/x]\eta_{i}\sigma_{i}, s' \eta_{i}\sigma_{i} \in \Br(S)$$
$$r \eta_{j}\sigma_{j}, \forall x C\,\eta_{j}\sigma_{j}, C[\delta/x] \eta_{j}\sigma_{j}, r'\eta_{j}\sigma_{j}\in \Br(R) $$
with $S$ and $R$ in $\calQ$. Let $\overline{s}=s, \forall x B, B[\beta/x], s'$ and $\overline{r}=r, \forall x C, C[\delta/x], r'$. We first rule out that either $\overline{s}\in\mathsf{B}$ and $Jx A, \overline{r}\in \mathsf{B}$ or $Jx A, \overline{s}\in\mathsf{B}$ and $\overline{r}\in \mathsf{B}$, with $J\in\{\forall, \exists\}$. Indeed, since in the first case $Jx A, \overline{r}\in \mathsf{B}$ and in the second case $Jx A, \overline{s}\in \mathsf{B}$ belong to the branches of one of the displayed cuts of $\calP_{1}$,   by weak regularity of $\calP_{1}$ we would have that $\overline{s}$ or $\overline{r}$ belong to the branches of a cut in $\calP$ with shallow formula $\exists x A$, contrary to our assumptions.

Now there are two cases.
\begin{itemize}
\item If $\beta$ is in the domain of $\eta_{i}$, then it is also in the domain of $\eta_{j}$ and
$$B[\beta/x]\eta_{i}\sigma_{i}=B\eta_{i}\sigma_{i}[\eta_{i}(\beta)/x]$$
$$C[\delta/x]\eta_{j}\sigma_{j}=C\eta_{j}\sigma_{j}[\eta_{j}(\delta)/x]$$
By Definition \ref{def:exp_preproof} of weak regularity, we suppose $\eta_{i}(\beta)=\eta_{j}(\delta)$ and then we have to check that: i)  $S$ and $R$ are both trees or cuts; ii)    $$s \eta_{i}\sigma_{i}, \forall x B\,\eta_{i}\sigma_{i}=r \eta_{j}\sigma_{j}, \forall x C\,\eta_{j}\sigma_{j}$$
First, we have $i=j$ and $\beta=\delta$, because $\eta_{i}$ and $\eta_{j}$ are both injective and have disjoint domains by the freshness assumption on the renamings. 
 Now, since either $\overline{s}\in \mathsf{B}$ and $\overline{r}\in \mathsf{B}$ or $J_{1}x A, \overline{s}\in \mathsf{B}$ and $J_{2}x A,\overline{r}\in \mathsf{B}$,  by weak regularity of $\calP_{1}$ we infer i) and $$s, \forall x B= r, \forall x C$$ thus ii) also follows.
 
 \item If $\beta$ is not in the domain of $\eta_{i}$, then it is not in the domain of $\eta_{j}$ either and 
$$B[\beta/x]\eta_{i}\sigma_{i}=B\eta_{i}\sigma_{i}[\beta/x]$$
$$C[\delta/x]\eta_{j}\sigma_{j}=C\eta_{j}\sigma_{j}[\delta/x]$$
since the $\forall$-expansions $+^{\alpha_{i}}, +^{\alpha_{j}}$ by assumption do not occur in $\calP$ and thus in $\calQ$. By Definition \ref{def:exp_preproof} of weak regularity, we suppose $\beta=\delta$ and then we have to check that: i)  $S$ and $R$ are both trees or cuts; ii)    $$s \eta_{i}\sigma_{i}, \forall x B\,\eta_{i}\sigma_{i}=r \eta_{j}\sigma_{j}, \forall x C\,\eta_{j}\sigma_{j}$$
Now, since either $\overline{s}\in \mathsf{B}$ and $\overline{r}\in \mathsf{B}$ or $J_{1}x A, \overline{s}\in \mathsf{B}$ and $J_{2}x A,\overline{r}\in \mathsf{B}$,  by weak regularity of $\calP_{1}$ we infer i) and $$s, \forall x B= r, \forall x C$$

If $i=j$, we are done. So assume $i\neq j$. We want to show that  no variable in the domain of $\eta_{i} \sigma_{i}$ or of  $\eta_{j} \sigma_{j}$ appears in  $s, \forall x B$ or $s, \forall x C$, so that we are done. Indeed, if there was such a variable $\gamma$, then there would be either some expansion $v$ containing $\gamma$ or some cut $v$ with $\Shallow(v)$  containing $\gamma$ such that  ${v}$ dominates $+^{\beta}$ in $\calP$ and thus ${v}<_{\calP_{1}}+^{\beta}$.  By choice of $\eta_{i}, \eta_{j}$, we have  $+^{\alpha_{k}}<_{\calP_{1}}+^{\gamma}$, where $k\in\{i, j\}$ and moreover $+^{\gamma}<_{\calP_{1}} {v}$; thus putting the three together,  $+^{\alpha_{k}}<_{\calP_{1}}+^{\gamma}<_{\calP_{1}} {v}<_{\calP_{1}}+^{\beta}$. Therefore, $\beta$ is in the domain of $\eta_{k}$, though we were assuming it is not. 
\end{itemize}

\textbf{Validity}. The formula
$$\Deep(\calP_1)=\big(\bigvee_{i=1}^n\Deep(E_i)\land \Deep(F_{1})\big)\lor \ldots \lor \big(\bigvee_{i=p}^l\Deep(E_i)\land \Deep(F_{q})\big)  \lor\Deep(\calP)$$
by assumption is valid and  logically implies the formulas
$$(1)\ \bigvee_{i=1}^n\Deep(E_i)\lor\ldots \lor \bigvee_{i=p}^l\Deep(E_i)\lor \Deep(\calP)$$
$$\Deep(F_{1})\lor\ldots \lor \Deep(F_{q})\lor \Deep(\calP)$$
which then are valid too. Therefore, also the formula

$$(2)\ \bigwedge_{i=1}^{l}\big(\Deep(F_{1})\eta_i\sigma_i\lor \ldots \lor \Deep(F_{q})\eta_i\sigma_i\lor \Deep(\calP)\eta_i\sigma_i\big)$$
is valid. We have that $\Deep(\calQ)$ is equal to the formula
$$(\Deep(E_{1})\land \Deep(F_{1})\eta_1\sigma_1) \lor \ldots \lor (\Deep(E_{1})\land \Deep(F_{q})\eta_1\sigma_1)\lor $$$$ \ldots \lor (\Deep(E_{l})\land \Deep(F_{1})\eta_l\sigma_l) \lor \ldots \lor (\Deep(E_{l})\land \Deep(F_{q})\eta_l\sigma_l)\lor$$$$ \Deep(\calP)\lor\bigvee_{i=1}^l\Deep(\calP\eta_i\sigma_i)$$
Now, fix any propositional truth assignment. Assume $\Deep(\calP)\lor\bigvee_{i=1}^l\Deep(\calP\eta_i\sigma_i)$ is false under this assignment. By (1), we can assume $\Deep(E_{1})$ is true (if one among $\Deep(E_{2}), \ldots, \Deep(E_{l})$ is true, the reasoning is symmetric). 
Indeed, by (2), the formula $\Deep(F_{1})\eta_1\sigma_1\lor \ldots \lor \Deep(F_{q})\eta_1\sigma_1$ is true, thus the formula $$(\Deep(E_{1})\land \Deep(F_{1})\eta_1\sigma_1) \lor \ldots \lor (\Deep(E_{1})\land \Deep(F_{q})\eta_1\sigma_1)$$ is true and finally $\calQ$ is true.


\textbf{Acyclicity}. We show that acyclicity of $<_{\calP_1}$ implies acyclicity of $<_{\calQ}$. Our strategy is to map any purported cycle of $\calQ$ into a cycle of $\calP_{1}$. 

\noindent The first difficulty we are going to face is that the expansions of $\calQ$ are not well-behaved copies of expansions of $\calP_{1}$, because of the substitutions $\sigma_{i}$. For example, an $\exists$-expansion $+^{u}$ of $\calP$ could contain some $\alpha_{j}$, so that we would find $+^{u[t_{i}/\alpha_{j}]}$ in $\calQ$. The trouble is that $+^{u[t_{i}/\alpha_{j}]}$ could contain variables that were not in $u$, thus there could be $\forall$-expansions  $+^{\beta}$ such that $+^{\beta}<_{\calQ} +^{u[t_{i}/\alpha]}$ but  it is not the case that $+^{\beta}<_{\calP_{1}} +^{u}$. This means that the relation $<_{\calQ}$ presents entirely new paths that were not in $\calP_{1}$. We might encounter paths in $\calQ$ that a priori could not be mapped back to paths in $\calP_{1}$: for example, a path featuring copies of old expansions suddenly followed by new ill-behaved expansion like $+^{u[t_{i}/\alpha_{j}]}$. However, we shall take care of this issue in a next  Lemma, explaining that such paths cannot end up in a cycle, because whatever path enters the ``renamed zone'' is stuck in it. The basic intuition is that since $u$ contains $\alpha_{j}$, everything in relation $<_{\calQ}$ with $+^{u[t_{i}/\alpha_{j}]}$ is under the scope of the renaming $\eta_{j}$ and all the renamed $\forall$-expansion have a renamed variable that can ``jump'' only in the renamed zone.

\noindent The second difficulty we are going to encounter is that one of the new displayed cuts $\{E_{k}, F_{h}\eta_{k}\sigma_{k}\}$ of $\calQ$ may have no direct correspondents in $\calP_{1}$. Again, the problem is that the term $t_{k}$ introduced by the substitution $\sigma_{k}$ could add some variable $\beta$ to the shallow formula $A[t_{k}/x]$ of the new cut which is not in the shallow formula $\exists x A$ of the original cuts. In this case, some $\forall$-expansion $+^{\beta}$ would be in relation with the new cut $\{E_{k}, F_{h}\eta_{k}\sigma_{k}\}$ of $\calQ$, but with none of the old ones of $\calP_{1}$. In this case the new cut will be mapped to $+^{t_{k}}$, in all others to the old cut $\{  \exists x\, A +^{t_i} E_i \cdots +^{t_j} E_j, \forall x\, \bar{A} +^{\alpha_{s}} F_{s} \}$ such that $i\leq k\leq j$. But when the new cut is mapped to $+^{t_{k}}$, one has to make sure that the new cut is not in relation in the cycle with some expansion in $F_{h}\eta_{k}\sigma_{k}$, otherwise the mapping we wish to build would fail.  Luckily, the \emph{old} expansion $+^{\beta}$ of $\calP_{1}$ ($\beta$ occurs in $t_{k}$) cannot jump in the ``renamed zone'' if it is part of a cycle. The ``renamed zone'' argument settles the issue also when $s\neq h$, which would cause our mapping to fail in the way we have just discussed.

Let us now work out the formal details. Consider any cycle
$$v_{1}<_{\calQ} v_{2} <_{\calQ}\ldots <_{\calQ} v_{m} <_{\calQ} v_{1}$$
 in $\calQ$. Then each $v_{i}$ is either  of the form $w_{i}\eta_{k}\sigma_{k}$ for some  $w_{i}$ which  is an old expansion or cut belonging to $\calP_{1}$ or $v_{i}$ is one of the new displayed  cuts $\{E_{k}, F_{h}\eta_{k}\sigma_{k}\}$ of $\calQ$; in this second case, we define $w_{i}$ to be the  displayed cut $\{  \exists x\, A +^{t_i} E_i \cdots +^{t_j} E_j, \forall x\, \bar{A} +^{\alpha_{s}} F_{s} \}$ of $\calP_{1}$ such that $i\leq k\leq j$, if the eigenvariable of the $\forall$-expansion $v_{i-1 (mod\ m)}$ does not occur in $t_{k}$, or to be the displayed occurrence of $+^{t_{k}}$ in $\calP_{1}$ otherwise. We want to show that 
$$w_{1}<_{\calP_1} w_{2} <_{\calP_1}\ldots <_{\calP_1} w_{m} <_{\calP_1} w_{1}$$
First of all we need the following:\\

\emph{``Renamed Zone'' Lemma}. Suppose $\alpha\in\{\alpha_{1}, \dots, \alpha_{q}\}$. If there are  $k$ and $j>0$ such that $+^\alpha<_{\calP_{1}}w_{k}$ and $v_{k}$ occurs in $\calP\eta_{j}\sigma_{j}$ or $F_{h}\eta_{j}\sigma_{j}$, then for all $i$, $+^{\alpha}<_{\calP_{1}}w_{i}$ and  $v_{i}$ occurs in $\calP\eta_{j}\sigma_{j}$ or $F_{h}\eta_{j}\sigma_{j}$.\\

\emph{Proof of the Lemma}. Since we are dealing with a cycle, we may assume without loss of generality that $k=1$. We proceed by induction on $i$, the case $i=1$ being already settled. Suppose by induction  hypothesis that $+^{\alpha}<_{\calP_{1}}w_{i}$ and  $v_{i}$ occurs in $\calP\eta_{j}\sigma_{j}$ or $F_{h}\eta_{j}\sigma_{j}$ and thus $w_{i}$ occurs in $\calP$ or $F_{h}$. If $v_{i}$ dominates $v_{i+1}$, then $w_{i}$ dominates $w_{i+1}$ 
 and thus $+^{\alpha}<_{\calP_1}w_{i+1}$ and $v_{i+1}$ occurs in $\calP\eta_{j}\sigma_{j}$ or $F_{h}\eta_{j}\sigma_{j}$. Suppose then $v_{i}$ is a $\forall$-expansion, thus also $w_{i}=+^\beta$ is a $\forall$-expansion and the eigenvariable of $v_{i}$ occurs in $v_{i+1}$ or $\Shallow(v_{i+1})$. Since $+^\alpha<_{\calP_{1}}+^\beta$ and $v_{i}\in \calP\eta_{j}\sigma_{j}$ or $F_{h}\eta_{j}\sigma_{j}$, we have $v_{i}=+^{\eta_{j}(\beta)}$. Now, $\eta_{j}(\beta)$ is fresh and as it occurs in $v_{i+1}$ or $\Shallow(v_{i+1})$, we have that $v_{i+1}$ must occur in $ \calP\eta_{j}\sigma_{j}$ or $F_{h}\eta_{j}\sigma_{j}$. Moreover, $v_{i+1}$ cannot be one of the new cuts of $\mathcal{Q}$, because $\Shallow(w)$ would not contain $\eta_{j}(\beta)$. Therefore, $v_{i+1}=w_{i+1}\eta_{j}\sigma_{j}$; since $\eta_{j}(\beta)$ occurs in $v_{i+1}$ and is fresh, it also occurs in $w_{i+1}\eta_{j}$ and thus $\beta$ must occur in $w_{i+1}$ or $\Shallow(w_{i+1})$, so that $+^{\alpha}<_{\calP_{1}}+^{\beta} <_{\calP_{1}} w_{i+1}$, which ends the proof of the Lemma.\\

We now prove, by induction on $i$, that for every $i$, $w_{i}<_{\calP_1}w_{i+1}$ (in the following the indexes $i$ of $w_{i}$ and $v_{i}$ will be taken modulo $m$).
 
  If $v_{i}$ dominates $v_{i+1}$, there are three possibilities:
  \begin{itemize}
  \item $w_{i}$ is an old cut or expansion of $\calP_{1}$ and $v_{i}=w_{i}\eta_{k}\sigma_{k}$. Then also $v_{i+1}=w_{i+1}\eta_{k}\sigma_{k}$, where $w_{i+1}$ is an old expansion of $\calP_{1}$,
  therefore also $w_{i}$ dominates $w_{i+1}$ and we get $w_{i}<_{\calP_1}w_{i+1}$. 

  \item $w_{i}$ is the  displayed cut $\{  \exists x\, A +^{t_i} E_i \cdots +^{t_j} E_j, \forall x\, \bar{A} +^{\alpha_{s}} F_{s} \}$ of $\calP_{1}$ such that $i\leq k\leq j$ and $v_{i}=\{E_{k}, F_{h}\eta_{k}\sigma_{k}\}$. 
  Since $v_{i}$ dominates $v_{i+1}$, either $v_{i+1}$ occurs in $E_{k}$ or $v_{i+1}$ occurs in $F_{h}\eta_{k}\sigma_{k}$. In the first case,  $v_{i+1}=w_{i+1}$, thus $w_{i}<_{\calP_1}w_{i+1}$. The second case  is not possible: if $v_{i+1}$ occurred in $F_{h}\eta_{k}\sigma_{k}$, then $v_{i+1}=w_{i+1}\eta_{k}\sigma_{k}$ with $w_{i+1}$ occurring in $F_{h}$. Therefore, $+^{\alpha_{h}}<_{\calP_{1}}w_{i+1}$ and by the ``Renamed Zone'' Lemma, $+^{\alpha_{h} }<_{\calP_{1}}w_{i-1}$ and $v_{i-1}$ occurs in $\calP\eta_{k}\sigma_{k}$ or $F_{h}\eta_{k}\sigma_{k}$.  Moreover,  $v_{i-1}=+^{\beta}$, with $\beta$ occurring in $\overline{A}[t_{k}/x]$. 
  Thus $\beta$ must be a variable of $\calP_{1}$ and $+^{\alpha_{h}}<_{\calP_{1}}w_{i-1}=+^{\beta}$. But then $+^{\beta}$ cannot occur  in $\calP\eta_{k}\sigma_{k}$ or $F_{h}\eta_{k}\sigma_{k}$, because $\beta$ is in the domain, but not in the range, of $\eta_{k}$: contradiction.

    \item $w_{i}=+^{t_{k}}$, $v_{i}=\{E_{k}, F_{h}\eta_{k}\sigma_{k}\}$. 
    Since $v_{i}$ dominates $v_{i+1}$, either $v_{i+1}$ occurs in $E_{k}$ or $v_{i+1}$ occurs in $F_{h}\eta_{k}\sigma_{k}$. The second case is excluded as before. In the first case,  $v_{i+1}=w_{i+1}$,   therefore $w_{i}<_{\calP_1}w_{i+1}$.
    \end{itemize}
  Suppose then $v_{i}$ is an $\forall$-expansion and thus $w_{i}=+^\gamma$ is a $\forall$-expansion as well. 
 We have two cases.
  
 \begin{enumerate}
 \item  $v_{i}=+^{\eta_{j}(\gamma)}$. Now, as $v_{i}<_{\calQ}v_{i+1}$, we know that $\eta_{j}(\gamma)$  occurs in $v_{i+1}$ or in $\Shallow(v_{i+1})$.  Moreover, $v_{i+1}$ cannot be one of the new displayed cuts of $\calQ$, because $\Shallow(v_{i+1})=A[t_{k}/x]$ or $\Shallow(v_{i+1})=\overline{A}[t_{k}/x]$ and since $\eta_{j}(\gamma)$   is fresh, it cannot occur in those formulas.  
 
Thus $v_{i+1}$ is the result of a substitution in an old cut (resp. expansion) $w_{i+1}$,   so $\Shallow(v_{i+1})=\Shallow(w_{i+1})\eta_{k}[t_{k}/\alpha]$ (resp. $v_{i+1}=w_{i+1}\eta_{k}[t_{k}/\alpha]$). Since $\eta_{j}(\gamma)$   is fresh, it cannot occur in $t_{k}$, therefore $j=k$ and $\gamma$ must occur also in $\Shallow(w_{i+1})$ (resp. $w_{i+1}$) and thus 
 $w_{i}=+^\gamma <_{\calP_1}w_{i+1}$.

\item $v_{i}=+^\gamma$ and $\gamma$ occurs in $v_{i+1}$ or in $\Shallow(v_{i+1})$.  Now we are left with two possibilities.
 
 \begin{itemize}
 \item  $v_{i+1}=w_{i+1}\eta_{k}\sigma_{k}$. If $k=0$, then $v_{i+1}=w_{i+1}$ and we are done. Moreover, if no $\alpha\in\{\alpha_{1}, \dots, \alpha_{q}\}$  occurs in $w_{i+1}$ or $\gamma$ does not occur in $t_{k}$, then $\gamma$ occurs in $w_{i+1}$, which means $w_{i}<_{\calP_{1}}w_{i+1}$. Suppose thus by contradiction that they do. Then $+^{\alpha}<_{\calP_{1}}w_{i+1}$ and $v_{i+1}$ occurs in $\calP\eta_{k}\sigma_{k}$ or $F_{h}\eta_{k}\sigma_{k}$. By the ``Renamed Zone'' Lemma, we conclude that $+^\alpha<_{\calP_{1}}+^\gamma$ and $+^\gamma$ occurs in $\calP\eta_{k}\sigma_{k}$ or $F_{h}\eta_{k}\sigma_{k}$: but then $\gamma$ is in the domain of  $\eta_{k}$, whereas $+^{\gamma}$ occurs in $\calP$ or $F_{h}$, contradiction.\\

\item 
 $v_{i+1}=\{E_{k}, F_{h}\eta_{k}\sigma_{k}\}$ is one of the new cuts of $\mathcal{Q}$.  Since $v_{i}<_{\calQ} v_{i+1}$,  $\gamma$ occurs in $\Shallow(v_{i+1})=A[t_{k}/x]$.  If $\gamma$ does not occur in $t_{k}$, then by definition of $w_{i+1}$,  $\gamma$ occurs in $$\exists x A=\Shallow(\{  \exists x\, A +^{t_i} E_i \cdots +^{t_j} E_j, \forall x\, \bar{A} +^\alpha F_{s} \})=\Shallow(w_{i+1})$$ with $i\leq k\leq j$, so $w_{i}<_{\calP}w_{i+1}$. If $\gamma$ does occur in $t_{k}$, then $w_{i+1}=+^{t_{k}}$, so $w_{i}<_{\calQ}w_{i+1}$.


\end{itemize}

\end{enumerate}

\textbf{Eigenvariable condition.} The fact that the eigenvariable of every $\forall$-expansion of $\calQ$ does not occur in $\Shallow(\calQ)$ is ensured by $\Shallow(\calQ)=\Shallow(\calP)$ and the new $\forall$-expansion having fresh eigenvariables.

\end{proof}
%

%
%
%
%

\subsection{Complexity Measures}
Let $\rightarrow$ denote the reflexive, transitive closure of the mapping $\mapsto$. Our next aim is to prove weak normalization of our reduction
system $\rightarrow$. It turns out that a parallel version of the proof strategy for cut-elimination can be applied to expansion trees.
Equipped with this observation, we can adapt to our setting the notion
of {\em rank} of a cut and the notion of \emph{maximal} cut of maximal rank, which in turn will allow us to prove
weak normalization. In fact, these notions can be formulated in a natural way using the
language of expansion trees we have introduced so far. In sequent calculus, a maximal cut  is just a cut of maximal rank having no cut of the same rank above it in the proof tree. In our setting, the geometry of the proof is represented by the dependency relation between cuts, so a maximal cut of  maximal rank is just a cut of maximal rank which is not smaller, according to the dependency relation, than any cut of that rank.
\begin{definition}[Rank of a Cut, Maximal Cut]
Let $\calP$ be an expansion proof with merges and $C$ a cut of $\calP$. We define $\rk(C)$ as the logical complexity of $\Shallow(C)$ and we call it the \emph{rank} of $C$. We call $C$  \emph{maximal} if for all cuts $D$ of $\calP$, $\rk(D)\leq \rk(C)$ and $\rk(D)=\rk(C)$ implies that $C\nless_{\calP} D$.
\end{definition}

\subsection{Weak Normalization}\label{sec:weak_normalization}

This section is dedicated to proving that there exists a terminating
strategy for the application of the cut-reduction rules.
Given an expansion proof $\calP$, our reduction strategy will be based 
on picking maximal cuts and reducing them in parallel. 

\begin{theorem}[Weak Normalization]\label{thm:weak_normalization}
For every expansion proof  $\calQ$ there is a cut-free expansion proof $\calQ^*$
such that $\calQ \rightarrow \calQ^*$ and $\Shallow(\calQ) = \Shallow(\calQ^*)$.
\end{theorem}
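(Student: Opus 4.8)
The plan is to prove weak normalization by defining a complexity measure on expansion proofs and a reduction strategy that decreases it. Following the standard Gentzen-style argument adapted to this setting, I would attach to each expansion proof $\calP$ the pair $(\rk(\calP), m(\calP))$, where $\rk(\calP)$ is the maximal rank of any cut in $\calP$ (with $\rk(\calP) = 0$ if $\calP$ is cut-free) and $m(\calP)$ is the number of maximal cuts, i.e.\ cuts of maximal rank that are not $<_\calP$-below any other cut of that rank. These pairs are ordered lexicographically, and the goal is to show that each strategic reduction step strictly decreases this measure, so that the process terminates at a cut-free proof; Lemma~\ref{lem:red_pres_proof} guarantees that every intermediate object is again a genuine expansion proof with the same shallow sequent.

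The reduction strategy is to select a maximal cut $C$ and reduce it. First I would argue that a maximal cut always exists: by acyclicity of $<_\calP$ (Definition~\ref{def:exp_preproof}, condition 2) the relation restricted to cuts of maximal rank has a $<_\calP$-maximal element, which is exactly a maximal cut. I then split on the top connective of $\Shallow(C)$. If $\Shallow(C)$ is atomic, the atomic step simply deletes $C$; since $C$ had maximal rank this either lowers $\rk(\calP)$ or, if other cuts share that rank, decreases the count $m(\calP)$. If $\Shallow(C)$ is a disjunction/conjunction, the propositional step replaces each such cut by cuts on the immediate subformulas, which have strictly smaller logical complexity; the crucial point is that the new cuts have strictly smaller rank than $C$, so no maximal-rank cut is created and $m(\calP)$ drops (possibly lowering $\rk(\calP)$). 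The most delicate case is the quantifier step, which is where the real work and the main obstacle lie.

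The hard part will be verifying that the quantifier step genuinely decreases the measure. The quantifier step replaces cuts with shallow formula $\exists x\,A$ (of rank, say, $r$) by cuts of the form $\{E_i, F_h\eta_k\sigma_k\}$, whose shallow formulas are the instances $A[t_i/x]$ and $\bar A[t_i/x]$; these have strictly smaller logical complexity than $\exists x\,A$, hence strictly smaller rank. The essential claim to establish is that the newly created cuts all have rank $< r$ and that the extra copies $\calP, \calP\eta_1\sigma_1, \ldots, \calP\eta_l\sigma_l$ introduce no cut of rank $\geq r$ that is maximal. This is where maximality of $C$ is indispensable: because $C$ is $<_\calP$-maximal among rank-$r$ cuts, and because (as shown in the proof of Lemma~\ref{lem:red_pres_proof}) no $\forall$-expansion above the $\alpha_i$ can feed a variable back into $\Shallow(C)=\exists x\,A$, the copies of $\calP$ attached below the cut contain only cuts that were already $<_\calP$-below $C$ and therefore of rank $\leq r$; any of rank exactly $r$ were already counted, and duplicating them does not increase the number of maximal rank-$r$ cuts because all the copies remain strictly below the position previously occupied by $C$ in the dependency order, while $C$ itself has been eliminated. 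Thus $m(\calP)$ strictly decreases. I would also need to confirm that the renaming and substitution machinery does not accidentally raise the rank of any copied cut, which follows from the fact that $\Shallow$ commutes with substitution (Lemma~\ref{lemma-subs}) and that rank depends only on the logical complexity of the shallow formula, which substitution leaves unchanged.

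Assembling these cases, each strategic step strictly decreases $(\rk(\calP), m(\calP))$ in the lexicographic order, so the strategy terminates after finitely many steps in a proof with no cuts, which by definition has $\rk = 0$. Since $\mapsto$ preserves both the expansion-proof property and the shallow sequent by Lemma~\ref{lem:red_pres_proof}, the resulting $\calQ^*$ is a cut-free expansion proof with $\Shallow(\calQ^*) = \Shallow(\calQ)$, and $\calQ \rightarrow \calQ^*$ by construction, as required. The main subtlety throughout is the bookkeeping around the surplus copy $\calP$ on the right-hand side of the quantifier step (the ``bridge'' phenomenon discussed in the Remark on Bridges): I must make sure that the un-substituted copy of $\calP$, which may still contain the eigenvariables $\alpha_i$, does not reintroduce a maximal-rank cut or create a new $<$-cycle, and here the renaming policy together with the ``Renamed Zone'' reasoning from Lemma~\ref{lem:red_pres_proof} is exactly what keeps the measure decreasing.
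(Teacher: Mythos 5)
Your overall skeleton (lexicographic induction on maximal rank plus a secondary count, a strategy that reduces maximal cuts, a case split on the top connective, and Lemma~\ref{lem:red_pres_proof} for preservation) is the same as the paper's, but the measure you chose does not decrease, and the claims you make to justify its decrease are false. The paper's second component is not the number of maximal cuts but the number of \emph{equivalence classes} of cuts of maximal rank, where two cuts are equivalent iff they have the same shallow formulas ($\exists x\, A$ paired with $\forall x\, \dual{A}$). This is essential because the quantifier step duplicates $\calP$ wholesale: every cut $D$ of rank $r$ in $\calP$ reappears as $D, D\eta_1\sigma_1, \ldots, D\eta_l\sigma_l$, so the number of rank-$r$ cuts, and also the number of \emph{maximal} rank-$r$ cuts, can strictly increase. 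Concretely, if $\calP$ contains a single rank-$r$ cut $D$ sharing no variables with the reduced cuts and $l \geq 2$, then before the step the maximal cuts are the reduced one(s) and $D$, while afterwards the $l+1$ identical copies of $D$ are all maximal, so your $m$ goes up. Your justification --- that ``the copies of $\calP$ attached below the cut contain only cuts that were already $<_\calP$-below $C$'' and that the copies ``remain strictly below the position previously occupied by $C$'' --- fails on both counts: $\calP$ is the \emph{entire} rest of the proof, whose cuts need not be related to $C$ at all, and once $C$ has been eliminated nothing keeps its former position in the order. Even your propositional and atomic cases break: a rank-$r$ cut $D$ with $D <_\calP C$ is not counted before the step (it is not maximal), but becomes maximal once $C$ is removed, so $m$ need not drop while the rank stays $r$. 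The paper's class count escapes all of this: the reduced class disappears, the newly created cuts have smaller rank, and --- the key technical point, proved using maximality of the whole reduced class --- neither the $\alpha_i$ nor any variable renamed by the $\eta_j$ occurs in the shallow formula of any rank-$r$ cut $D$ of $\calP$, so every copy $D\eta_j\sigma_j$ has the same shallow formula as $D$ and falls into $D$'s class; hence the count drops from $k$ to $k-1$.

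A second, related gap: your strategy selects a single maximal cut, but the quantifier step must by its side conditions reduce \emph{all} cuts with shallow formula $\exists x\, A$ simultaneously, and the accounting above needs every one of them to be maximal. What you need is therefore a \emph{maximal equivalence class}, and its existence does not follow from the existence of a maximal cut, since a maximal cut's class may contain non-maximal cuts. The paper proves this separately: it introduces an order $\prec$ on classes, shows that any dependency $C <_\calQ D$ between cuts of two classes already forces $\mathcal{A} \prec \mathcal{B}$ (because any chain into a cut ends in an eigenvariable occurring in $\Shallow(D)$, a formula common to all of $\mathcal{B}$), and then derives a maximal class from acyclicity of $\prec$. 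You correctly identify the duplication of $\calP$ as the danger point and gesture at the right tools, but without the equivalence-class measure and the maximal-class lemma the induction does not go through.
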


\begin{proof}
We partition the collection of cuts occurring in $\calQ$ in equivalence classes, by means of the equivalence relation
$$C_{1}\sim C_{2}$$
$$\mbox{ iff } $$
$$C_{1}=\{  \exists x\, A +^{t_1} E_1 \cdots +^{t_n} E_n , \forall x\, \bar{A} +^\alpha F \}\mbox{ and } C_{2}= \{  \exists x\, A +^{s_1} G_1 \cdots +^{s_m} G_m , \forall x\, \bar{A} +^\beta H \}$$

We now proceed by induction on the pair $(r, k)$, where $r$ is the greatest among the ranks of the cuts in $\calQ$ and $k$ is the number of equivalence classes whose cuts have rank $r$. If $\calQ$ is already cut-free, we are done. Otherwise, we wish to single out a \emph{maximal} equivalence class: an equivalence  class whose cuts are all maximal. 

We first prove that a maximal equivalence class exists. Consider the relation $\prec$ between equivalence classes defined as follows: $\mathcal{A}\prec\mathcal{B}$ if and only if there is a $C\in \mathcal{A}$ such that $C<_{\calQ} D$ for every $D$ in $\mathcal{B}$. 
\noindent We begin by showing that this relation is not cyclic. Suppose indeed by contradiction that $\mathcal{A}_{1}\prec\ldots \prec \mathcal{A}_{n}\prec \mathcal{A}_{1}$. For every $i$, let $C_{i}\in \mathcal{A}_{i}$ be a cut such that  $C_{i}<_{\calQ} D$ for every $D$ in $\mathcal{A}_{i+1}$ or for every $D$ in $\mathcal{A}_{1}$, if $i=n$. By construction, $C_{1}<_{\calQ}\ldots  <_{\calQ} C_{n}<_{\calQ} C_{1}$, a contradiction, because $<_{\calQ}$ is acyclic. 

\noindent Secondly, we show that that for any  equivalence classes $\mathcal{A}$ and $\mathcal{B}$, if we assume there are $C\in\mathcal{A}$,  $D\in \mathcal{B}$ such that  $C<_{\calQ} D$, then  $\mathcal{A}\prec \mathcal{B}$. Indeed, since $C<_{\calQ} D$, we have a chain of cuts or expansions $C<_{\calQ} w_{1}<_{\calQ}\ldots  <_{\calQ} w_{n}<_{\calQ} D$; moreover, by Definition \ref{defi-dependency}, $w_{n}$ must be a $\forall$-expansion $+^{\beta}$ such that $\beta$ occurs in $\Shallow(D)$. By definition of the relation $\sim$, for every $E\in \mathcal{B}$, we have $\Shallow(E)=\Shallow(D)$. Therefore, $C<_{\calQ} w_{1}<_{\calQ}\ldots  <_{\calQ} w_{n}<_{\calQ} E$. We conclude $\mathcal{A}\prec \mathcal{B}$.

\noindent Third, suppose by contradiction that there is no maximal equivalence class, assuming there is at least one equivalence class. We want to show that for every equivalence class $\mathcal{A}$ whose cuts have rank $r$, there is an equivalence class $\mathcal{B}$ whose cuts have rank $r$ such that $\mathcal{A}\prec \mathcal{B}$; since there are finitely many equivalence classes, this implies that the relation $\prec$ is cyclic, a contradiction. Indeed, consider any equivalence class $\mathcal{A}$ whose cuts have rank $r$. By assumption, there is a cut $C\in \mathcal{A}$ such that $C<_{\calQ} D$, with $D$ of rank $r$. Let $\mathcal{B}$ the equivalence class of $D$. By definition of $\sim$, all cuts of $\mathcal{B}$ have rank $r$, and by what we have proved above, $\mathcal{A}\prec \mathcal{B}$, which is what we wanted to show.

Now let us consider the possible reduction steps.

\textbf{Quantifier Step}. If there is at least a cut whose shallow formula is an existential formula,  let $\sigma_1, \ldots, \sigma_{l}$ be respectively the substitutions $[{t_1}/\alpha_{1}\dots t_{1}/\alpha_{q}], \ldots, [{t_l}/\alpha_{1}\dots t_{l}/\alpha_{q}]$, $\mathcal{A}$ be a maximal equivalence class and 
$$\mathcal{A}=\{  \exists x\, A +^{t_1} E_1 \cdots +^{t_n} E_n , \forall x\, \bar{A} +^{\alpha_{1}} F_{1} \}, \ldots, \{  \exists x\, A +^{t_{p}} E_p \cdots +^{t_{l}} E_l , \forall x\, \bar{A} +^{\alpha_{q}} F_{q} \}$$
$$\calQ = \mathcal{A}, \calP$$
$$\calQ'= \{ E_1, F_{1}\eta_1\sigma_{1}\},\ldots, \{ E_1, F_{q}\eta_1\sigma_{1}\},\ldots, \{E_{l}, F_{1}\eta_l\sigma_{l} \},\ldots,   \{E_{l}, F_{q} \eta_l\sigma_{l} \}, \calP, \calP\eta_1\sigma_{1}, \ldots, \calP\eta_l  \sigma_{l}$$
where the $\forall$-expansions $+^{\alpha_{1}}, \dots, +^{\alpha_{q}}$ do not occur in $\calP$, no cut in $\calP$ has shallow formula $\exists x A $ and $\eta_{1}, \dots, \eta_{l}$ are renamings to fresh variables of the eigenvariables $\beta$ of $\calP, F_{1},\ldots, F_{q}$ such that for some $1\leq i\leq q$ and occurrence of $+^{\alpha_{i}}$ we have    $+^{\alpha_{i}}<_{\calP_{1}}+^{\beta}$. First of all, we observe that it is alway possible to satisfy the condition that no  $+^{\alpha_{i}}$ occurs in $\calP$:  by weak regularity of $\calQ$, every occurrence of $+^{\alpha_{i}}$ is on the right of the shallow formula $\forall x \overline{A}$ of some cut.

Let 
$$D=\{  \exists x\, B +^{s_1} G_1 \cdots +^{s_m} G_m , \forall x\, \bar{B} +^\beta H \}$$ be any cut of  rank $r$ in $\calP$. We want to prove that neither $\alpha_{i}$ nor any variable $\gamma$ in the domain of any $\eta_{k}$ can occur in $\exists x\, B $ or be equal to $\beta$. Indeed, $\alpha_{i}$ by hypothesis must be different from $\beta$ and if it occurred in $\exists x\, B $,  we would  have by Definition \ref{defi-dependency} that for every $C\in\mathcal{A}$, $C<_{\calQ}+^{\alpha}<_{\calQ}D$, contradicting the maximality of $C$. Moreover, if  $\gamma$ in the domain of any $\eta_{k}$ occurred in $\exists x\, B $, then for some $C\in\mathcal{A}$, $C<_{\calQ}+^{\alpha_{i}}<_{\calQ}+^{\gamma}<_{\calQ}D$, contradicting the maximality of $C$; on the other hand, if $\gamma$ were equal to $\beta$, then  for some $C\in\mathcal{A}$, $C<_{\calQ}+^{\alpha_{i}}<_{\calQ}+^{\beta}$, therefore we would have a chain of expansions or cuts $+^{\alpha_{i}}<_{\calQ}w_{1}<_{\calQ}\ldots  <_{\calQ} w_{n}<_{\calQ}+^{\beta}$, so that $w_{n}=D$, contradicting the maximality of $C$.

 Let now $D_{1}, \ldots, D_{m}$ be the cuts of rank $r$ in $\calP$. Then, for each $i$, 
$$\calP\eta_i\sigma_{i}= D_{1}\eta_i\sigma_{i}, \ldots, D_{m}\eta_i\sigma_{i}, \calP_{i}$$
$$\calP=D_{1}, \ldots, D_{m}, \mathcal{P}_{0}$$
with no cut of rank $r$ appearing in $\calP_{i}$. Let 
$$\mathcal{D}=D_{1}, \ldots, D_{m}, D_{1}\eta_1\sigma_{1}, \ldots, D_{m}\eta_1\sigma_{1}, \ldots, D_{1}\eta_l\sigma_{l}, \ldots, D_{m}\eta_l\sigma_{l} $$
Then
$$\calQ'= \{ E_1, F_{1}\eta_1\sigma_{1}\},\ldots, \{ E_1, F_{q}\eta_1\sigma_{1}\},\ldots, \{E_{l}, F_{1}\eta_l\sigma_{l} \},\ldots,   \{E_{l}, F_{q} \eta_l\sigma_{l} \}, \mathcal{D},\calP_{0},\ldots, \calP_{n}$$
By what we have proved, for every $i, j$, if 
$$D_{i}=\{  \exists x\, B +^{s_1} G_1 \cdots +^{s_m} G_m , \forall x\, \bar{B} +^\beta H \}$$
then
$$D_{i}\eta_j\sigma_{j}=\{  \exists x\, B +^{s_1'} G_1' \cdots +^{s_m'} G_m', \forall x\, \bar{B} +^\beta H' \}$$
Therefore, the number of equivalence classes of rank $r$ in $\calQ'$ is the number of classes in which $\mathcal{D}$ is partitioned, that is exactly the number of classes in which $D_{1}, \ldots, D_{m}$ is partitioned: $k-1$. 
 By induction hypothesis, $\calQ'\rightarrow \calQ^*$, with $Q^{*}$ cut-free, which is the thesis.

\textbf{Propositional Step}.  Assume
$$\calQ= \{ E_1 \lor F_1, E'_1\land F'_1 \},\dots, \{ E_m \lor F_m, E'_m\land F'_m \},  \calP$$
$$\calQ'=\{ E_1, E'_1 \}, \{ F_1, F'_1 \},\dots, \{ E_m, E'_m \}, \{ F_m, F'_m \}, \calP$$
where $\calP$ does not contain any other cut of rank $r$. 
Then, the number of equivalence classes of cuts of rank $r$ in $\calQ'$ is strictly smaller than $k$, because propositional cuts are not in relation $\sim$ with any other cut and form singleton classes. By induction hypothesis, $\calQ'\rightarrow \calQ^*$, with $Q^{*}$ cut-free, which is the thesis.

\textbf{Atomic Step}. As in the previous case.

\end{proof}

\subsection{Strong Normalization}\label{sec:strong_normalization}
%
Having shown weak normalization of the cut-reduction rules in the previous
section, it is important to turn to the question of strong normalization,
i.e.~whether {\em all} reduction sequences are of finite length. We conjecture
that our cut-reduction rules are indeed strongly normalizing, and present some
evidence for this claim by discussing how our reduction rules behave on
a translation of the example~\cite[Figure 16]{Heijltjes10Classical}, which shows how bridges can cause infinite loops in the setting of proof forests.

This example can be translated as an expansion proof
of the form $(C^+,C^-),\top$  with $\top$ the atomic formula ``true'' and
\[
\begin{array}{lll}
C^+=&\exists x\, \dual{P(x)} &+^c\,  \dual{P(c)} +^{f(\alpha)} \dual{P(f(\alpha))}\\
C^-=&\forall x\, P(x) &+^\alpha\, P(\alpha)\\
\end{array}
\]
We have no issue at all:
\begin{footnotesize}$$(C^+,C^-)\mapsto \{P(c), \dual{P(c)}\}, \{P(f(\alpha)), \dual{P(f(\alpha))}\}, \top, \top[c/\alpha], \top[f(\alpha)/\alpha]\mergered \{P(c), \dual{P(c)}\}, \{P(f(\alpha)), \dual{P(f(\alpha))}\}, \top\mapsto \top$$\end{footnotesize}
 This is essentially due to the different treatment of bridges (i.e.~dependencies
between different sides of a cut, see Section~\ref{sec:cut_reduction_steps}) in our
formalism: at the core of the non-termination of~\cite[Figure 14]{Heijltjes10Classical}
lies the bridge in $(C^{+}, C^{-})$~\cite[Figure 16]{Heijltjes10Classical} which induces a cycle.
In the setting of proof forests, the non-termination due to bridges is handled by adding a pruning reduction,
having the task of removing bridges as soon as they appear.
In our setting we are able to get by naturally without pruning. This is due to our different renaming and duplication policy: not everything greater in the
dependency relation than the cut is duplicated and renamed. In particular, the expansion $+^{f(\alpha)}$ is not duplicated, even if it is above $\alpha$ in the dependency relation.
%
\subsection{Confluence}
It is well-known that cut-elimination and similar procedures in classical logic are typically
non confluent, see e.g.~\cite{Urban00Classical,Ratiu12Exploring,Baaz05Experiments}
for case studies and~\cite{Baaz11Nonconfluence,Hetzl12Computational} for asymptotic results.
Neither the proof forests of~\cite{Heijltjes10Classical} nor the Herbrand nets 
of~\cite{McKinley13Proof} have a confluent reduction. The situation is analogous
in our formalism: the reduction is not confluent. In fact, one can use the same example
to demonstrate this; let
\begin{align*}
\calP = & \{ \exists x\, A +^s A\unsubst{x}{s} +^t A\unsubst{x}{t}, \forall x\, \dual{A} +^\alpha \dual{A}\unsubst{x}{\alpha} \},\\
& \{ \exists x\, B +^\alpha B\unsubst{x}{\alpha} , \forall x\, \dual{B} +^\beta \dual{B}\unsubst{x}{\beta} \},\\
& \exists x \exists y\, C +^\alpha ( \exists y\, C\unsubst{x}{\alpha} +^\beta C\sop\sel{x}{\alpha},\sel{y}{\beta}\scl ).
\end{align*}
which is the translation of~\cite[Figure 12]{Heijltjes10Classical} into an expansion
proof with cut. Then it can be verified by a quick calculation that the choice
of reducing either the cut on $A$ or that on $B$ first determines which of
two normal forms is obtained.

However cut-elimination in classical logic can be shown confluent on the level
of the (cut-free) expansion tree on a certain class of proofs~\cite{Hetzl12Herbrand}.
For future work we hope to use such techniques for describing a confluent reduction
in expansion proofs whose normal form is unique and most general in the sense
that it contains all other normal forms as sub-expansions.

\section{Conclusion}

We showed in this paper that a relatively simple syntactic approach to expansion proofs with cut is possible. We strived for keeping the definitions and the technical details as elementary as possible. Our effort should have set the ground for addressing open combinatorial problems such as strong normalization. The price to pay for simpler reductions, however, is that we duplicate more than in Heijltjes' proof forests. This issue could be solved by an operation of merging copies of similar trees, but that should rather be understood as an optimization, rather than a theoretical necessity. Moreover, merging tends to destroy the connection with operational game semantics.  In either case, however,  we do not see a perfect correspondence between our cut-elimination process and Coquand style plays.  Copying part of the old expansion proof during the quantifier reduction step, in particular, does not seem to admit a game theoretic reading. Heijltjes' proof forests, on the contrary, avoid this copy. However, a perfect correspondence with game semantics is still a general open problem, as neither proof forests nor Herbrand nets enjoy one.


\vskip 0.2in
{\bf Acknowledgements.}
The authors would like to thank M.~Baaz, K.~Chaudhuri, W.~Heijltjes, R.~McKinley,
D.~Miller, and H. Moeneclaey for many helpful discussions about expansion trees, the
$\varepsilon$-calculus, proof forests and Herbrand nets.

\bibliographystyle{plain}
\bibliography{references}

\end{document}